\renewcommand{\phi}{\varphi}
\newcommand{\E}{\mathbb{E}}
\newcommand{\N}{\mathbb{N}}
\newcommand{\R}{\mathbb{R}}
\newcommand{\cK}{\mathcal{K}}
\newcommand{\cN}{\mathcal{N}}
\def\ds1{\mathds{1}}
\renewcommand{\epsilon}{\varepsilon}
\newcommand{\eps}{\epsilon}
\newcommand{\argmin}{\mathop{\mathrm{argmin}}}
\renewcommand{\tilde}{\widetilde}
\newlength{\minipagewidth}
\newcommand{\beq}{\begin{equation}}
\newcommand{\eeq}{\end{equation}}
\newcommand{\beqa}{\begin{eqnarray}}
\newcommand{\eeqa}{\end{eqnarray}}
\newcommand{\beqan}{\begin{eqnarray*}}
\newcommand{\eeqan}{\end{eqnarray*}}
\def\ba#1\ea{\begin{align*}#1\end{align*}} 
\def\banum#1\eanum{\begin{align}#1\end{align}} 
\def \nnz {\mathrm{nnz}}
\def\eps{\varepsilon}
\newtheorem{theorem}{Theorem}
\newtheorem{lemma}{Lemma}
\newtheorem{remark}{Remark}
\newtheorem{definition}{Definition}
 \newcommand{\BlackBox}{\rule{1.5ex}{1.5ex}}  
 \newenvironment{proof}{\par\noindent{\bf Proof\ }}{\hfill\BlackBox\\[2mm]}
\global\long\def\norm#1{\left\Vert #1\right\Vert }
\newcommand{\red}[1]{#1}
\newif\ifshortversion
\newcommand{\mideq}[1]{$$#1$$}
\newcommand{\fullversion}[1]{#1}
\begin{document}

\title{An homotopy method for $\ell_p$ regression provably beyond self-concordance and in input-sparsity time}

\author{S\'ebastien Bubeck
\and Michael B. Cohen
\and 
Yin Tat Lee 
\and 
Yuanzhi Li}
\maketitle
\begin{abstract}
We consider the problem of linear regression where the $\ell_2^n$ norm loss (i.e., the usual least squares loss) is replaced by the $\ell_p^n$ norm. We show how to solve such problems in $O_p(n^{|1/2 - 1/p|} \log^{O(1)}(1/\eps))$ (dense) matrix-vector products and $O_p(\log^{O(1)}(1/\eps))$ matrix inversions, or in $O_p(n^{|1/2 - 1/p|} \log^{O(1)}(1/\eps))$ calls to a (sparse) linear system solver. This improves the state of the art for any $p\not\in \{1,2,+\infty\}$. Furthermore we also propose a randomized algorithm solving such problems in {\em input sparsity time}, i.e., $O_p((Z + \mathrm{poly}(d))\log^{O(1)}(1/\eps))$ where $Z$ is the size of the input and $d$ is the number of variables. Such a result was only known for $p=2$. Finally we prove that these results lie outside the scope of the Nesterov-Nemirovski's theory of interior point methods by showing that any symmetric self-concordant barrier on the $\ell_p^n$ unit ball has self-concordance parameter $\tilde{\Omega}(n)$.
\end{abstract}

%
%

\section{Introduction}
Linear programming is concerned with optimization problems of the form:
\mideq{\min_{x \in \R^d_+ : \ A x = b} c \cdot x ~,}
for some $A \in \R^{n \times d}, b \in \R^n, c \in \R^d$. Such problem often comes with a guarantee on how far a solution can be, in which case the problem can be rewritten as (up to rescaling)
$$\min_{x \in \R^d : \ \|x\|_{\infty} \leq 1 \ \text{and} \ A x = b} c \cdot x ~.$$
Classical interior point methods show that such problems can be solved up to machine precision via solving $\sqrt{d}$ linear systems, see e.g., \cite{NN94}. 

In this paper we investigate the complexity of replacing the $\ell_{\infty}$ constraint by an $\ell_p$ constraint, $1<p<+\infty. $\fullversion{That is:
\begin{equation} \label{eq:classicalformulation} 
\min_{x \in \R^d : \ \|x\|_{p} \leq 1 \ \text{and} \ A x = b} c \cdot x ~.
\end{equation}
}The case of $\ell_2$ exactly corresponds to solving a linear system. Moreover for the Euclidean ball $\{x : \|x\|_2 \leq 1\}$ there exists a barrier with self-concordance $\nu=1$, and thus the Nesterov-Nemirovski's interior point methods theory correctly predicts that the case $p=2$ can be solved in a dimension-free number of iterations. Our contribution is to show that the Nesterov-Nemirovski theory is provably suboptimal for any $p \not\in \{1,2,\infty\}$. More precisely we show that for any $p \neq 2$, any {\em symmetric} self-concordant barrier on $\{x \in \R^d : \|x\|_p \leq 1\}$ has self-concordance parameter at least roughly $d$. On the other hand we propose a new homotopy method which requires only $O^*(n^{|1/2 - 1/p|})$ iterations\footnote{We use the notation $O^*$ to hide polynomial factors in $p^2/(p-1)$ and polylogarithmic terms.}
\fullversion{
\footnote{We note that the dual problem to \eqref{eq:classicalformulation} corresponds to the $\ell_q$ norm problem (where $1/p+1/q=1$) but we shall not use this fact and we treat any $p 
\in (1,+\infty)$ (observe that $|1/2-1/p|=|1/2-1/q|$).}}
, thus interpolating between the known results for $p \in \{1,2,+\infty\}$. 

Curiously, our homotopy method runs in $O^*(n^{|1/2 - 1/p|})$ calls to a sparse linear system solver (if $A$ is sparse), or alternatively in $O^*(n^{|1/2 - 1/p|})$ dense matrix-vector products and $O^*(1)$ matrix inversions. \fullversion{Although our result does not imply such result for $p=\infty$, we note that there is no known algorithm for $p=\infty$ (i.e., for linear programming) with the latter running time, and the best result in this direction involves $O^*(1)$ matrix inversions of $d \times d$ size and many matrix inversions of smaller matrices \cite{lee2015efficient}.
}
On top of the results above, we also show how to combine this new method with recent advances in accelerated stochastic gradient descent to obtain an algorithm running in input sparsity time, namely a running time of the form $O^*(Z + d^c)$ where $c$ depends on $p$ and $Z$ the number of non-zeros in $A$. Unfortunately our approach does not {\em a priori} shed light on an input sparsity time algorithm for the case $p=\infty$ since our running time explodes as $p\rightarrow \infty$. Such a result would in our opinion be a major breakthrough.

In the rest of the paper we consider the following equivalent problem, which we call $\ell_p$ regression:
\begin{equation} \label{eq:regressionformulation}
\min_{x \in \R^d} c \cdot x + \|A x - b\|_p^p ~.
\end{equation}
Observe that to have a bounded solution we need to assume that $c \in \mathrm{ker}(A)^{\perp}$. 
\fullversion{
Note also that \eqref{eq:classicalformulation} can essentially be reduced to \eqref{eq:regressionformulation} by using the matrix $\left( {\begin{array}{cc}
   \lambda A & 0 \\
   0 & \mu I_d \\
  \end{array} } \right)$ and target vector $\left( {\begin{array}{cc}
   \lambda b \\
   0 \\
  \end{array} } \right)$ for some $\lambda>0$ large enough and some $\mu>0$. In particular note that the parameter regime for $(n,d)$ is inversed compared to the discussion above, namely in \eqref{eq:classicalformulation} one had $d \geq n$ whereas for the rest of the paper we have $n \geq d$ (and in fact $n$ potentially much larger than $d$).

\red{Recently, there are a lot of progress for the $\ell_p$ regression for the case of $n\gg d$ \cite{cohen2015p,woodruff2013subspace,meng2013low,clarkson2013low,clarkson2016fast,sohler2011subspace,dasgupta2009sampling}. These results show various ways to find a matrix $A'$ with fewer rows such that $\|Ax\|_{p}\approx\|A'x\|_{p}$ for all vectors $x \in \mathbb{R}^d$. In particular, \cite{cohen2015p} shows that one can find such $A'$ with only roughly $d^{\max(p/2,1)}$ many rows by sampling rows of $A$ and rescaling. As a result, they show how to solve $\ell_p$ regression with $1+\eps$ multiplicative error in time $\tilde{O}(Z + d^{\max(p/2,1)+1}/\eps^{O(1)} + d^3)$ time.
For the case $p>2$, our runtime in Theorem \ref{thm:input_sparsity} is better in both dimension dependence and $\eps$ dependence. However, the $\log(1/\eps)$ dependence comes with a cost that our runtime is invariant under the conjugate transform $p \rightarrow \frac{p}{p-1}$. Therefore, our algorithm in the case $p < 2$ is much worse than existing results.
}

In Section \ref{sec:hom} we give our new homotopy method to solve \eqref{eq:regressionformulation}. In Section \ref{sec:inputsparsity}, we give our input sparsity time algorithm. Finally in Section \ref{sec:lb} we prove the $\tilde{\Omega}(n)$ lower bound on the self-concordance parameter for symmetric barriers on the $\ell_p^n$ ball. 
}
%

\section{An homotopy method for $\ell_p$ regression} \label{sec:hom}
\red{The main difficulty in optimizing the $\ell_p$ norm is its behavior around $0$, namely its second derivative either does not exist (for $p<2$) or equals to $0$ (for $p>2$).} We resolve this issue by gradually modifying the $\ell_p$ norm around $0$ (starting with a large modification, and slowly reducing it). This idea falls in the general framework of {\em homotopy methods}. 

\subsection{Smoothing family and homotopy path}
To develop our homotopy method we introduce a family $(f_t)_{t \geq 0}$ of functions approximating $s \mapsto |s|^p$ with the following properties: (i) $f_0(s) = |s|^p$, (ii) $s \mapsto f_t(s)$ is quadratic on $[-t,t]$, and (iii) $s \mapsto f_t(s)$ and $t \mapsto f_t(s)$ are $C^1$. We realize this with the following family:
\[
f_{t}(s)=\begin{cases}
\frac{p}{2}t^{p-2}s^{2} & \text{if }|s|\leq t ,\\
|s|^{p}+(\frac{p}{2}-1)t^{p} & \text{otherwise.}
\end{cases}
\]
\red{We construct this function by replacing the function $|s|^p$ by a quadratic function on $\{s: |s|\leq t\}$ and shifting the function outside to make sure it is twice differentiable. We note  that our framework works for many other families of functions and we choose this mainly for its simple formula.}

We also use a slight abuse of notation and write for a vector $s = (s_1, \hdots, s_n)$, \mideq{f_t(s) :=(f_t(s_1), \hdots, f_t(s_n)).} Next we define the homotopy path as follows:
\begin{equation}
x(t) := \argmin_{x \in \R^d, \ x \in \mathrm{ker}(A)^{\perp}} c \cdot x+\sum_{i=1}^{n}f_{t}(s_{i}(x))\label{eq:xt_def}
\end{equation}
where $s(x)=Ax-b$ (we also use the notation $s(t) = Ax(t) - b$).

The key observation is that the path $(x(t))_{t>0}$ is ``easy to follow", namely, $f_{(1-h)t}$ remains well conditioned on a neighborhood of $x(t)$ which contains $x((1-h) t)$ for some constant $h$ (which depends only on $p$). We introduce the following notion of neighborhood, for $s \in \R^n$ and $\gamma \in \R$,
$$\cN_s(\gamma) := \{s' \in \R^n : \forall i \in [n], \left| (s_i')^{p/2} - (s_i)^{p/2} \right| \leq \gamma\} ~.$$

\begin{lemma} \label{lem:cond}
For any $0 \leq h \leq \frac1{2p}$, we have $s((1-h) t) \in \cN_{s(t)}(\gamma)$
where 
$$\gamma = \left(1+\frac{p^3}{p-1} \sqrt{n} h\right) t^{p/2}.$$
Furthermore, for all $s \in \cN_{s(t)}(\gamma)$, one has
\mideq{D_t \preceq \nabla^2 f_{(1-h)t}(s) \preceq \kappa D_t ~,}
where $D_t$ is the diagonal matrix whose $i^{th}$ diagonal entry is
$$\frac{p-1}{2} \max(t^{p/2}, |s_i(t)|^{p/2} - \mathrm{sign}(p-2)\gamma)^{2 - 4/p}$$ 
and $\kappa = \frac{2p^2}{p-1}\left(3 + \frac{2p^3}{p-1} \sqrt{n} h \right)^{|2 - 4 /p|}$.
\end{lemma}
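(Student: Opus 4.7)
The plan is to handle the two parts of the lemma separately. The Hessian sandwich is essentially a case analysis. For any $s\in\cN_{s(t)}(\gamma)$, each diagonal entry $(\nabla^{2}f_{(1-h)t}(s))_{ii}$ equals either $p((1-h)t)^{p-2}$ (when $|s_{i}|\leq(1-h)t$) or $p(p-1)|s_{i}|^{p-2}$ (when $|s_{i}|>(1-h)t$), according to which branch of the piecewise definition applies. I would compare each of these to $(D_{t})_{ii}=\tfrac{p-1}{2}\max(t^{p/2},|s_{i}(t)|^{p/2}-\mathrm{sign}(p-2)\gamma)^{2-4/p}$ by rewriting $|s_{i}|^{p-2}=(|s_{i}|^{p/2})^{2-4/p}$ and invoking the neighborhood hypothesis $\bigl||s_{i}|^{p/2}-|s_{i}(t)|^{p/2}\bigr|\leq\gamma$. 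The outer case split is on the sign of $p-2$, which controls the monotonicity of $x\mapsto x^{2-4/p}$; this is precisely why the shift $-\mathrm{sign}(p-2)\gamma$ appears inside the $\max$ defining $D_{t}$. The factor $(1-h)^{p-2}$ is harmless under $h\leq1/(2p)$, and the mismatch between the quadratic and power-law branches of $f_{(1-h)t}$ produces the final constant $\kappa$.

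For the path displacement, the plan is an ODE argument along $\tau\in[(1-h)t,t]$. Differentiating the optimality condition $A^{\top}\nabla f_{\tau}(s(\tau))=-c$ in $\tau$ gives $A^{\top}(g(\tau)+H(\tau)\dot{s}(\tau))=0$, where $H(\tau)=\nabla^{2}f_{\tau}(s(\tau))$ and $g(\tau)=\partial_{\tau}\nabla f_{\tau}(s)\big|_{s=s(\tau)}$; together with $\dot{s}(\tau)\in\mathrm{range}(A)$ this pins down $\dot{s}(\tau)$, and the standard projection identity $A(A^{\top}HA)^{-1}A^{\top}\preceq H^{-1}$ yields $\|\dot{s}(\tau)\|_{H(\tau)}^{2}\leq\|g(\tau)\|_{H(\tau)^{-1}}^{2}$. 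An explicit computation on $f_{\tau}$ shows $g_{i}(\tau)=p(p-2)\tau^{p-3}s_{i}(\tau)\,\IND\{|s_{i}(\tau)|\leq\tau\}$, since $f_{\tau}$ has no $\tau$-dependence outside the quadratic region; combined with the interior entry $H_{ii}=p\tau^{p-2}$ this gives $\|g(\tau)\|_{H(\tau)^{-1}}^{2}\leq np(p-2)^{2}\tau^{p-2}$. Changing variables to $u_{i}(\tau)=|s_{i}(\tau)|^{p/2}$, the chain rule and a coordinatewise comparison of $|\dot{u}_{i}|^{2}$ with $H_{ii}|\dot{s}_{i}|^{2}$ produce $\|\dot{u}(\tau)\|_{2}\leq O\bigl(p^{3/2}|p-2|/\sqrt{p-1}\bigr)\sqrt{n}\,\tau^{p/2-1}$. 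Integrating over $[(1-h)t,t]$ and using $1-(1-h)^{p/2}\leq(p/2)h$ for $h\leq1/(2p)$ delivers the desired entrywise bound.

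The main obstacle will be the piecewise structure of $f_{\tau}$, compounded by the fact that the change of variables $s\mapsto|s|^{p/2}$ itself switches character at $|s|=\tau$ and, for $p<2$, has infinite slope at the origin. The saving grace is that the drift $g$ is supported precisely on the quadratic region of $f_{\tau}$, which is exactly where $H_{ii}=p\tau^{p-2}$ is uniformly large; consequently the potential blowup of $|\dot{u}_{i}|=(p/2)|s_{i}|^{p/2-1}|\dot{s}_{i}|$ near $s_{i}=0$ is absorbed when passing to the $H$-norm, and both sides of the case split $p\gtrless 2$ are covered by the same reference scale $\max(t^{p/2},|s_{i}(t)|^{p/2}\mp\gamma)^{2-4/p}$.
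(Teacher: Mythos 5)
Your proposal follows the paper's proof quite closely. For the Hessian sandwich, the paper does exactly the case analysis you describe: it bounds $f_{(1-h)t}''(s'_i)$ between $\min(1,p-1)\,p\max(((1-h)t)^{p/2},|s'_i|^{p/2})^{2-4/p}$ and $\max(1,p-1)\,p\max(((1-h)t)^{p/2},|s'_i|^{p/2})^{2-4/p}$, uses $\bigl||s'_i|^{p/2}-|s_i(t)|^{p/2}\bigr|\leq\gamma$ together with the monotonicity of $x\mapsto x^{2-4/p}$ (which is why the shift $-\mathrm{sign}(p-2)\gamma$ appears), and finally bounds the ratio $\max(a,b+\gamma)/\max(a,b-\gamma)$ by $(a+2\gamma)/a$. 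For the displacement, the paper also differentiates the KKT condition $A^\top f_t'(s(t))=-c$ to get a formula for $\dot x(t)$ (Lemma~\ref{lem:dynamic_path}), uses the projection identity to obtain $\norm{H_t^{1/2}\dot s}_2\leq\norm{H_t^{-1/2}(\tfrac{d}{dt}f_t')(s(t))}_2$, computes that the drift is supported in the quadratic region, extracts a coordinatewise bound on $\dot s_i$, and then passes to $\tfrac{d}{dt}s_i(t)^{p/2}$ and integrates. So the overall route is identical.

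The one substantive point worth flagging is your proposed ``saving grace'' for the $p<2$ blowup near $s_i=0$. You claim that the coordinatewise comparison $|\dot u_i|^2\lesssim H_{ii}|\dot s_i|^2$ absorbs the factor $|s_i|^{p/2-1}$. That comparison requires $\tfrac{p^2}{4}|s_i|^{p-2}\lesssim H_{ii}$, which holds in the exterior region (where $H_{ii}=p(p-1)|s_i|^{p-2}$) and in the interior for $p>2$, but in the interior for $p<2$ one has $H_{ii}=p\tau^{p-2}$ while $|s_i|^{p-2}/\tau^{p-2}=(\tau/|s_i|)^{2-p}\to\infty$ as $|s_i|\to 0$, so the comparison fails; the $H$-norm does \emph{not} absorb it. This is in fact the same soft spot as in the paper: the ``simplified'' unified bound $|\dot s_i(t)|\leq\tfrac{p^2}{p-1}\sqrt n\,(t/|s_i(t)|)^{(p-2)/2}$ claimed at the end of Lemma~\ref{lem:s_speed} fails in the interior when $p<2$ (the actual case-analyzed bound there is the constant $|p-2|\sqrt n$, which is not dominated by the unified expression as $|s_i|\to 0$), and consequently the pointwise bound \eqref{eq:yetagainaderiv} on $\tfrac{d}{dt}s_i(t)^{p/2}$ is not justified when $s_i(\cdot)$ passes near zero. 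Repairing this requires a different argument on the interior interval for $p<2$ --- e.g.\ bounding $\bigl||s_i(t)|^{p/2}-|s_i((1-h)t)|^{p/2}\bigr|$ directly via the Lipschitz estimate $|\dot s_i|\leq|p-2|\sqrt n$ together with the $(p/2)$-Hölder continuity of $x\mapsto x^{p/2}$ --- rather than by integrating a pointwise derivative bound that does not hold. The conclusion is likely still correct, but your stated justification, like the paper's, does not establish it for $p<2$.
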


Using the notation $O_p$ to hide polynomial factors in $p^2/(p-1)$, we have $\kappa = O_p(n^{|1-2/p|})$ for $0 \leq h \leq \frac{1}{2p}$. \red{The ratio between the upper bound and the lower bound of the Hessian is called the condition number. This number is important due to the following theorem:}
\begin{lemma}[\cite{Nes04}]
\label{lem:AGD} Given a convex function $f$ satisfies $D \preceq \nabla^2 f(x) \preceq \kappa D$ for all $x \in \mathbb{R}^n$ with some given fixed diagonal matrix $D$ and some fixed $\kappa$. Given an initial
point $x_{0}$ and an error parameter $0<\eps<\frac{1}{2}$, the accelerated gradient descent (AGD) outputs
$x$ such that
\[
f(x)-\min_{x}f(x)\leq\eps(f(x_{0})-\min_{x}f(x))
\]
in $O(\sqrt{\kappa} \log(\kappa / \eps))$ iterations. Each iteration involves computing $\nabla f$ at some point $x$ and some linear-time calculations.
\end{lemma}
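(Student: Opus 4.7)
The plan is to reduce this to the standard accelerated gradient descent guarantee of Nesterov on $\mu$-strongly convex, $L$-smooth functions via a linear change of variables that whitens the Hessian with respect to $D$. Since $D$ is a positive diagonal matrix (positivity is forced by $D \preceq \kappa D$ together with the fact that we want a meaningful lower curvature bound on a function with finite minimum), we may form $D^{1/2}$ and $D^{-1/2}$ entrywise. Set $g(y) := f(D^{-1/2} y)$. Then by the chain rule $\nabla g(y) = D^{-1/2} \nabla f(D^{-1/2} y)$ and $\nabla^2 g(y) = D^{-1/2} \nabla^2 f(D^{-1/2} y) D^{-1/2}$, so the two-sided Hessian bound on $f$ is exactly equivalent to
\[
I \preceq \nabla^2 g(y) \preceq \kappa I \quad \text{for all } y \in \R^n .
\]
Hence $g$ is $1$-strongly convex and $\kappa$-smooth in the standard sense, and $g$ and $f$ have the same values along the identified iterates $x = D^{-1/2} y$.

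Next, I would invoke the standard AGD convergence guarantee for $1$-strongly convex, $\kappa$-smooth functions (see e.g.\ Theorem 2.2.3 in \cite{Nes04}): the iterates $(y_k)$ produced by Nesterov's scheme satisfy
\[
g(y_k) - g^* \;\leq\; 2\kappa \left( 1 - \tfrac{1}{\sqrt{\kappa}} \right)^{k} \bigl( g(y_0) - g^* \bigr),
\]
where the factor $\kappa$ in front is the price of starting Nesterov's estimate sequence with function gap rather than with squared initial distance (one can alternatively take the minimum of this and a version involving $\|y_0 - y^*\|^2$, but the displayed form is all that is needed). Therefore, choosing
\[
k \;=\; \Bigl\lceil \sqrt{\kappa} \, \log\!\bigl( 2\kappa / \eps \bigr) \Bigr\rceil \;=\; O\!\bigl(\sqrt{\kappa} \log(\kappa/\eps)\bigr)
\]
and using $(1 - 1/\sqrt{\kappa})^{\sqrt{\kappa}} \leq e^{-1}$, we get $g(y_k) - g^* \leq \eps (g(y_0) - g^*)$. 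Setting $x_k := D^{-1/2} y_k$ and using $g(y) = f(D^{-1/2}y)$ transports this to the required bound $f(x_k) - f^* \leq \eps (f(x_0) - f^*)$.

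For the per-iteration cost, a standard AGD step on $g$ requires one evaluation of $\nabla g(y)$ plus a constant number of length-$n$ vector combinations. Since $\nabla g(y) = D^{-1/2} \nabla f(D^{-1/2} y)$, each iteration costs exactly one evaluation of $\nabla f$ at the point $x = D^{-1/2} y$, plus $O(n)$ additional work (two diagonal rescalings by $D^{\pm 1/2}$ and the affine combinations of the previous two iterates dictated by AGD). Undoing the change of variables in this bookkeeping yields the statement that each iteration consists of one gradient computation and linear-time auxiliary work.

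There is no serious obstacle here: the only thing to be careful about is the source of the $\log\kappa$ factor (as opposed to $\log(1/\eps)$), which arises purely from the initialization — the standard strongly convex AGD bound is naturally expressed in terms of $\|y_0 - y^*\|^2$, and converting from function gap to squared distance via strong convexity ($\|y_0 - y^*\|^2 \leq 2(g(y_0) - g^*)$) and from squared distance back to function gap via smoothness ($g(y_k) - g^* \leq \tfrac{\kappa}{2}\|y_k - y^*\|^2$) together absorb exactly one factor of $\kappa$ inside the logarithm. Nothing in this argument uses more than the stated Hessian sandwich and the convexity of $f$, so the lemma follows directly.
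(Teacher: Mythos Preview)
The paper does not prove this lemma at all: it is stated with a citation to \cite{Nes04} and used as a black box. Your reduction via the change of variables $y = D^{1/2}x$ to the standard $1$-strongly-convex / $\kappa$-smooth setting, followed by the textbook AGD rate, is exactly the intended derivation and is correct. The only cosmetic point is that your positivity justification for $D$ is slightly informal; in the paper's applications $D$ (namely $D_t$ from Lemma~\ref{lem:cond}) has strictly positive diagonal by construction, so this is not an issue.
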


\red{Therefore, if the condition number of $f_{(1-h)t}$ was valid globally (instead of merely on the neighborhood $\cN_{s(t)}(\gamma)$) then we would apply AGD to find $x(t)$ in $O_p(n^{|1/2-1/p|} \log(n/\eps))$ iterations. 
}

The proof of the above lemma is really the key to our homotopy method, however it is rather tedious calculation and thus we postpone it to Section \ref{sec:prooflemcond}. \red{We remark that the choice of neighborhood is forced by how much $x(t)$ can change in the worst case when we change $t$ and $D_t$ is chosen such that it is close to $\nabla^2 f_t(s)$. Therefore, despite this being the key lemma, its formulation is automatic given the choice of $f_t$. We suspect the choice of $f_t$ does not matter too much either given that it needs to be close to $x^p$.}

Fortunately there is a rather simple idea to actually make the condition number valid globally, namely to {\em extend smoothly} the function outside of the good neighborhood.

\subsection{Algorithm}
To describe our algorithm, we introduce the following definitions to extend $f_t$ smoothly outside the range $[\ell,u]$.
\begin{definition} \label{def:1}
For any positive $t$ and $0\leq \ell \leq u$, we define $f_{t,\ell,u}$ to be the ``quadratic extension" of $f_t$ on $[\ell,u]$, more precisely:
\[
f_{t,\ell,u}(s) := \begin{cases}
f_{t}(s) & \text{if }\ell\leq s\leq u\\
f_{t}(u)+f_{t}'(u)(s-u)+\frac{1}{2}f_{t}''(u)(s-u)^{2} & \text{if } s\geq u\\
f_{t}(\ell)+f_{t}'(\ell)(s-\ell)+\frac{1}{2}f_{t}''(\ell)(s-\ell)^{2} & \text{otherwise}
\end{cases}.
\]
\end{definition}

Note that both the (global) smoothness and strong convexity of $f_{t,\ell,u}$ is equal to the one of $f_{t}$ restricted to $[\ell,u]$. Furthermore by strict convexity of $f_t$ and $f_{t,\ell,u}$ one has that for any convex function $\phi$, the functions $\R^n \ni s \mapsto \phi(s)+\sum_{i=1}^n f_t(s_i)$ and $s \mapsto \phi(s)+\sum_{i=1}^n f_{t,\ell_i,u_i}(s_i)$ admit the same unique minimizer $s^*$ provided that $\forall i \in [n], s^*_i \in [\ell_i, u_i]$.

\red{Although the Hessian of $f_{t,\ell,u}$ in the $s$ variables is well-conditioned, it might be ill-conditioned in the $x$ variables (namely, its Hessian is not close to a diagonal matrix globally). To apply AGD (Lemma \ref{lem:AGD})}, we need to do a change of variables according to $A$ as follows:

\begin{definition} \label{def:2}
Let $D_t$ be the diagonal matrix defined in Lemma \ref{lem:cond} and $P_t$ be the preconditioner defined by
$((A^{\top} D_t A)^{\dagger})^{1/2}$. 

We introduce the functions $\tilde{f}_t$ and $g_t$, respectively the quadratic extension of $f_{(1-h) t}$ on a well-chosen hyperrectangle and its preconditioned version (with the cost vector $c$):
$$\tilde{f}_{t}(s) := \sum_{i=1}^n \tilde{f}_{t,i}(s)\quad\text{ with }\quad \tilde{f}_{t,i}(s)  := f_{(1-h) t, (|s_i(t)|^{p/2} - \gamma)^{2/p}, (|s_i(t)|^{p/2} + \gamma)^{2/p}}(s_i) ~,$$
and
\mideq{g_t(y) := c \cdot P_t y + \tilde{f}_{t}(s(P_t y)) ~.}
\end{definition}
\begin{remark}
\red{The hyperrectangle we used to define the quadratic extension is exactly $\cN_{s(t)}(\gamma)$. Therefore, we have that $\tilde{f}_{t} = f_{(1-h) t}$ on $\cN_{s(t)}(\gamma)$. Although $\tilde{f}_{t}$ depends on $\gamma$, we choose not to indicate it in the symbol for notation simplicity.}
\end{remark}
Using Lemma \ref{lem:cond} and simple calculations, we obtain:
\begin{lemma} \label{lem:condg} 
With the notations of Definition \ref{def:2}, we have for all $y \in \mathbb{R}^d$,
\[
Q_{t}\preceq\nabla^{2}g_{t}(y)\preceq\kappa\cdot Q_{t}
\]
where $Q_{t}$ is the orthogonal projection matrix $(A^{\top}D_{t}A)^{\frac{1}{2}}(A^{\top}D_{t}A)^{\dagger}(A^{\top}D_{t}A)^{\frac{1}{2}}$.
\end{lemma}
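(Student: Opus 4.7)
The plan is to differentiate $g_t$ twice via the chain rule, then separately control the ``inner'' Hessian of $\tilde{f}_t$ (which is diagonal and sandwiched between $D_t$ and $\kappa D_t$ uniformly in $s$) and verify by direct algebra that the outer sandwich $P_t A^\top D_t A P_t$ coincides with the projector $Q_t$.

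First I would compute, using the chain rule and the fact that the map $y \mapsto c\cdot P_t y$ is affine, that
\[
\nabla^{2}g_{t}(y) \;=\; P_{t}\,A^{\top}\bigl(\nabla^{2}\tilde{f}_{t}(s(P_{t}y))\bigr)\,A\,P_{t}.
\]
Since $\tilde{f}_{t}(s)=\sum_{i}\tilde{f}_{t,i}(s_{i})$, the middle factor $\nabla^{2}\tilde{f}_{t}(s)$ is the diagonal matrix whose $i$-th entry is $\tilde{f}_{t,i}''(s_{i})$. So the entire problem reduces to (a) bounding this diagonal matrix between $D_{t}$ and $\kappa D_{t}$ globally, and (b) identifying the conjugate $P_{t}A^{\top}D_{t}AP_{t}$ with $Q_{t}$.

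For step (a), by construction $\tilde{f}_{t,i}$ is the quadratic extension of $f_{(1-h)t}$ on the interval $[(|s_i(t)|^{p/2}-\gamma)^{2/p},(|s_i(t)|^{p/2}+\gamma)^{2/p}]$. Inside this interval $\tilde{f}_{t,i}''(s_i)=f_{(1-h)t}''(s_i)$ is controlled by Lemma \ref{lem:cond} (since any $s$ with the corresponding coordinate in this interval lies in $\cN_{s(t)}(\gamma)$, and the Hessian bound there is coordinate-wise because $\nabla^{2}f_{(1-h)t}$ is diagonal). Outside the interval the second derivative of the quadratic extension is constant and equal to its value at the boundary endpoint, so by continuity the same two-sided bound $(D_{t})_{ii}\leq \tilde{f}_{t,i}''(s_i)\leq \kappa (D_{t})_{ii}$ persists for every $s_{i}\in\mathbb{R}$. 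This delivers $D_{t}\preceq \nabla^{2}\tilde{f}_{t}(s)\preceq \kappa D_{t}$ for all $s\in\mathbb{R}^{n}$, and substituting into the chain-rule expression yields
\[
P_{t}A^{\top}D_{t}A\,P_{t} \;\preceq\; \nabla^{2}g_{t}(y) \;\preceq\; \kappa\,P_{t}A^{\top}D_{t}A\,P_{t}.
\]

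For step (b), set $M:=A^{\top}D_{t}A\succeq 0$ and diagonalize $M=U\Sigma U^{\top}$. By definition $P_{t}=M^{\dagger/2}=U\Sigma^{\dagger/2}U^{\top}$, so
\[
P_{t}A^{\top}D_{t}A\,P_{t} \;=\; M^{\dagger/2}M\,M^{\dagger/2} \;=\; U\,\Sigma^{\dagger/2}\Sigma\Sigma^{\dagger/2}\,U^{\top},
\]
and the middle diagonal matrix has a $1$ in each position where $\Sigma_{ii}>0$ and a $0$ otherwise, i.e.\ it is the orthogonal projector onto the range of $M$. An identical calculation shows that $Q_{t}=M^{1/2}M^{\dagger}M^{1/2}$ equals the same projector. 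Hence $P_{t}A^{\top}D_{t}AP_{t}=Q_{t}$, and combined with the previous sandwich this proves the lemma.

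The only part requiring a bit of care is (a): one must check that the interval in Definition \ref{def:2} is precisely the coordinate projection of $\cN_{s(t)}(\gamma)$, so that applying Lemma \ref{lem:cond} coordinate-by-coordinate is legitimate, and that extending by a quadratic beyond the endpoints preserves the two-sided bound. Both are immediate from the definitions, so I expect no genuine difficulty.
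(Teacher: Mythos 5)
Your proposal is correct and follows essentially the same route as the paper: apply the chain rule to get $\nabla^{2}g_{t}(y)=P_{t}A^{\top}\bigl(\nabla^{2}\tilde{f}_{t}(s(P_{t}y))\bigr)AP_{t}$, observe that the quadratic-extension construction makes the diagonal Hessian bound $D_{t}\preceq\nabla^{2}\tilde{f}_{t}\preceq\kappa D_{t}$ hold globally rather than just on $\cN_{s(t)}(\gamma)$, and then identify $P_{t}A^{\top}D_{t}AP_{t}$ with the projector $Q_{t}$. The only difference is cosmetic: the paper states the projector identity without proof, while you spell it out via the eigendecomposition of $A^{\top}D_{t}A$.
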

\ifshortversion \else
\begin{proof}
Note that 
\[
\nabla^{2}g_{t}(y)=P_{t}^{\top}A^{\top}\Sigma(y)AP_{t}
\]
where $\Sigma(y)$ is the diagonal matrix whose diagonal is $\nabla^{2}\tilde{f}_{(1-h)t}(s(P_{t}y))$.
By the construction of $\tilde{f}$, the global smoothness and strong
convexity of $\tilde{f}_{(1-h)t}$ is same as $f_{(1-h)t}$ restricted
to $\cN_{s(t)}(\gamma)$. Hence, Lemma \ref{lem:cond} shows that
\mideq{D_{t}\preceq\nabla^{2}\tilde{f}_{(1-h)t}(s(P_{t}y))\preceq\kappa D_{t}.}
Therefore, we have that
\[
P_{t}^{\top}A^{\top}D_{t}AP_{t}\preceq\nabla^{2}g_{t}(y)\preceq\kappa\cdot P_{t}^{\top}A^{\top}D_{t}AP_{t}.
\]
The result follows from the equation $P_{t}^{\top}A^{\top}D_{t}AP_{t}=(A^{\top}D_{t}A)^{\frac{1}{2}}(A^{\top}D_{t}A)^{\dagger}(A^{\top}D_{t}A)^{\frac{1}{2}}$.
\end{proof}
\fi

Our algorithm can now be described as follows, where $t_0>0$ and $h = \frac1{2p}$ are parameters, and $t_k := (1-h)^k t_0$ for $k \in \N$.
\begin{itemize}
\item Find $x(t_0)$ using Lemma \ref{lem:init} and compute $P_{t_0}$.
\item For $k = 0, 1, \cdots, O^*(1)$
\begin{itemize}
\item Given $x(t_k)$ and $P_{t_{k}}$, run accelerated gradient descent (AGD) on $g_{t_k}$ to obtain an approximate of $y(t_{k+1})=P_{t_k}^{-1} x(t_{k+1})$.
\item Compute $x(t_{k+1})$ by the formula $x(t_{k+1}) = P_{t_k} y(t_{k+1})$, and compute $P_{t_{k+1}}$.
\end{itemize}
\end{itemize}
Each phase of the algorithm requires a matrix inversion to compute $P_t$, and each iteration in a run of AGD requires a (dense) matrix-vector product. Theorem \ref{th:main} below shows that in total the algorithm needs $O^*(1)$ matrix inversion and $O^*(n^{|1/2-1/p|})$ dense matrix-vector products.

A variant of the above algorithm uses the preconditioner $P_t' := (A^{\top} D_t A)^{\dagger} A^{\top} \sqrt{D_t}$ instead of $P_t$. 
If $A$ is sparse, rather than computing the preconditioner explicitly, one can execute a run of AGD by solving the corresponding sparse linear system at each iteration. Theorem \ref{th:main} (which applies both for the preconditioner $P_t$ and $P_t'$) then shows that in total this algorithm needs to solve $O^*(n^{|1/2-1/p|})$ sparse linear systems.

\subsection{Initial Point and Termination Conditions}
\ifshortversion
We skip the proof of the following two simple lemmas in the conference version.
\else
We start by showing that $x(t_0)$ is easy to compute for $t_0$ large enough.
\fi
\begin{lemma} \label{lem:init}
For $t^{p-1}>\frac{2}{p}c^{\top}(A^{\top}A)^{\dagger}c$ and $t>2\norm b_{2}$,
we have
\mideq{x({t})=(A^{T}A)^{\dagger}A^{T}b-\frac{1}{p}t^{2-p}(A^{\top}A)^{\dagger}c.}
\end{lemma}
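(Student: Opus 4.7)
The strategy is to exploit the piecewise definition of $f_t$: whenever $|s_i(x)| \leq t$ for every $i$, the penalty $\sum_i f_t(s_i(x))$ collapses to the simple quadratic $\frac{p}{2} t^{p-2}\|Ax-b\|_2^2$. So I would first guess the formula by pretending we are in that regime, solve the resulting unconstrained quadratic for $x \in \ker(A)^{\perp}$, and then check a posteriori that our guess does in fact sit inside the quadratic piece. Since both the quadratic piece and $f_t$ itself are convex and agree (with matching derivatives) on $[-t,t]$, any stationary point lying in the region $\{x : \|Ax-b\|_\infty \leq t\}$ is automatically the global minimizer of the original (convex) objective in \eqref{eq:xt_def}.

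Concretely, I would set the gradient of $c\cdot x + \frac{p}{2} t^{p-2}\|Ax-b\|_2^2$ to zero to obtain $A^{\top}A x = A^{\top}b - \frac{t^{2-p}}{p}\, c$, and then project onto $\ker(A)^\perp$ via the pseudoinverse to recover exactly the claimed formula $x(t) = (A^{\top}A)^{\dagger}A^{\top}b - \frac{1}{p}t^{2-p}(A^{\top}A)^{\dagger}c$. Plugging this candidate into $s(x) = Ax-b$, and writing $\Pi := A(A^{\top}A)^{\dagger}A^{\top}$ for the orthogonal projector onto the range of $A$, gives the decomposition
\[
s(x(t)) \;=\; -(I-\Pi)b \;-\; \frac{t^{2-p}}{p}\, A(A^{\top}A)^{\dagger}c,
\]
where the two summands live in $\ker(A^{\top})$ and $\mathrm{range}(A)$ respectively and so are orthogonal.

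The final step is the verification. Since $\|s(x(t))\|_\infty \leq \|s(x(t))\|_2$ it is enough to bound the Euclidean norm. By orthogonality and $\|(I-\Pi)b\|_2 \leq \|b\|_2$ together with $\|A(A^{\top}A)^{\dagger}c\|_2^2 = c^{\top}(A^{\top}A)^{\dagger}c$ (using $c \in \ker(A)^{\perp}$), I obtain
\[
\|s(x(t))\|_2 \;\leq\; \|b\|_2 + \frac{t^{2-p}}{p}\sqrt{c^{\top}(A^{\top}A)^{\dagger}c}.
\]
The two hypotheses are then used one per summand: $t > 2\|b\|_2$ handles the first term by $t/2$, and the second hypothesis on $c^{\top}(A^{\top}A)^{\dagger}c$ (suitably interpreted) bounds the second term by $t/2$, yielding $\|s(x(t))\|_\infty \leq t$ as required.

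The main obstacle is essentially bookkeeping: keeping track of the exponents of $t$ in the two hypotheses and making sure the bound on $\frac{t^{2-p}}{p}\sqrt{c^{\top}(A^{\top}A)^{\dagger}c}$ cleanly reduces to $t/2$. Beyond that, uniqueness of the minimizer (hence of $x(t)$) follows automatically from strict convexity of the quadratic on $\ker(A)^{\perp}$, which is guaranteed because $(A^{\top}A)$ restricted to $\ker(A)^{\perp}$ is positive definite.
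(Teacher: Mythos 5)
Your proposal follows essentially the same route as the paper: guess the answer by pretending we are in the quadratic piece of $f_t$, read off the stationary point of $c\cdot x+\frac{p}{2}t^{p-2}\|Ax-b\|_2^2$ on $\ker(A)^\perp$ via the pseudoinverse, and then verify a posteriori that $\|Ax(t)-b\|_\infty\le t$ (the paper verifies $\|Ax(t)-b\|_2<t$), after which the KKT condition for the global problem is satisfied. Your orthogonal decomposition into $-(I-\Pi)b$ and a range-of-$A$ term is the same computation the paper does inside its $\ell_2$-norm estimate.

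One detail you handled more carefully than the paper: the second summand has Euclidean norm $\frac{t^{2-p}}{p}\sqrt{c^{\top}(A^{\top}A)^{\dagger}c}$, as you correctly write, whereas the paper's display bounds it by $\frac{1}{p}t^{2-p}\,c^{\top}(A^{\top}A)^{\dagger}c$ without the square root. To make the second term drop below $t/2$ one really needs $t^{p-1}>\frac{2}{p}\sqrt{c^{\top}(A^{\top}A)^{\dagger}c}$, which is what the hypothesis should read (the version stated in the lemma is only sufficient when $c^{\top}(A^{\top}A)^{\dagger}c\ge 1$). Your hedge that the second hypothesis must be ``suitably interpreted'' is exactly right; were you to turn this into a written proof, state the hypothesis in the square-root form.
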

\fullversion{
\begin{proof}
Let $x=(A^{T}A)^{\dagger}A^{T}b-\frac{1}{p}t^{2-p}(A^{\top}A)^{\dagger} c$.
Note that
\begin{align*}
\norm{Ax-b}_{2} & =\norm{(A(A^{\top}A)^{\dagger}A^{\top}-I)b-\frac{1}{p}t^{2-p}A(A^{\top}A)^{\dagger}c}_{2}\\
 & \leq\norm b_{2}+\frac{1}{p}t^{2-p}c^{\top}(A^{\top}A)^{\dagger}c<t
\end{align*}
where we used the assumption on $t$ at the end. By the definition
of $f_{t}$, we have that
\[
\sum_{i=1}^{n}f_{t}((Ax-b)_{i})=\frac{p}{2}t^{p-2}\norm{Ax-b}_{2}^{2}.
\]
Checking the KKT condition, we can see that $x$ is indeed the minimizer
of $\min_{x}c \cdot x+\sum_{i=1}^{n}f_{t}((Ax-b)_{i})$.
\end{proof}

Next we observe that for $t$ small, $x(t)$ is indeed close to optimal.
}
\begin{lemma} \label{lem:approx}
For any $t \geq 0$ one has
$$c \cdot x(t) + \|A x(t) - b\|_p^p \leq n (\frac{p}{2} - 1) t^p + \min_{x \in \R^d} c \cdot x + \|A x - b\|_p^p ~.$$
\end{lemma}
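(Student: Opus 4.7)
The plan is to bracket $f_t$ pointwise by $|s|^p$ and then invoke the optimality of $x(t)$ for the smoothed objective. Set $F_t(x) := c\cdot x + \sum_i f_t(s_i(x))$ and $G(x) := c \cdot x + \|Ax - b\|_p^p$, and fix any $x^* \in \argmin_x G(x)$. Suppose we can establish the two scalar bounds
\[
|s|^p \;\leq\; f_t(s) \;\leq\; |s|^p + \left(\tfrac{p}{2} - 1\right) t^p \qquad \text{for every } s \in \R.
\]
Summing the lower bound over the coordinates of $s(x(t))$ gives $G(x(t)) \leq F_t(x(t))$, and summing the upper bound over the coordinates of $s(x^*)$ gives $F_t(x^*) \leq G(x^*) + n(\tfrac{p}{2}-1)t^p$. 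Sandwiching with the defining optimality $F_t(x(t)) \leq F_t(x^*)$ yields
\[
G(x(t)) \;\leq\; F_t(x(t)) \;\leq\; F_t(x^*) \;\leq\; G(x^*) + n\left(\tfrac{p}{2} - 1\right) t^p,
\]
which is exactly the claimed inequality.

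The whole proof therefore collapses to verifying the two scalar bounds by a case split on whether $|s| \leq t$ or $|s| > t$. The outer piece $|s| > t$ is immediate: by definition $f_t(s) = |s|^p + (\tfrac{p}{2}-1)t^p$, so both inequalities hold with equality. On the inner piece $|s|\leq t$ we have $f_t(s) = \tfrac{p}{2} t^{p-2} s^2$. Writing $u = |s|/t \in [0,1]$, the lower bound reduces to $u^{p-2} \leq \tfrac{p}{2}$, which holds because $p \geq 2$ forces $u^{p-2}\leq 1 \leq \tfrac{p}{2}$. The upper bound reduces to $\tfrac{p}{2} u^2 \leq u^p + \tfrac{p}{2} - 1$, i.e.\ that $h(u) := u^p + \tfrac{p}{2} - 1 - \tfrac{p}{2} u^2$ is non-negative on $[0,1]$; one checks $h(1) = 0$ and $h'(u) = p(u^{p-1} - u) \leq 0$ on $[0,1]$ for $p \geq 2$, so $h$ is decreasing with $h(1) = 0$ and hence $h \geq 0$.

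I anticipate no substantive obstacle: every step is an elementary one-variable inequality together with the trivial use of optimality of $x(t)$. The only thing to be careful about is the implicit range of $p$. The stated form of the bound fits $p \geq 2$ directly as above; for $1 < p \leq 2$ the two scalar inequalities reverse (one gets $f_t(s) \leq |s|^p \leq f_t(s) + (1-\tfrac{p}{2})t^p$), and the same chain of three inequalities, with the roles of the lower and upper bounds swapped, yields $G(x(t)) \leq G(x^*) + n(1 - \tfrac{p}{2}) t^p$, i.e.\ the natural absolute-value version of the stated estimate.
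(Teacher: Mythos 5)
Your proof is correct and follows exactly the same argument as the paper: lower-bound $|s|^p \leq f_t(s)$, use optimality of $x(t)$ for the smoothed objective, and upper-bound $f_t(s) \leq |s|^p + (\tfrac{p}{2}-1)t^p$. The paper leaves the scalar inequality $\|f_t - |\cdot|^p\|_\infty = (\tfrac{p}{2}-1)t^p$ as a one-line "it only remains to verify"; your case split on $|s|\leq t$ versus $|s|>t$ supplies that verification, and your observation that the stated form implicitly requires $p\geq 2$ (with the sign flipping for $1<p<2$) is a valid clarification of an implicit assumption in the lemma's statement.
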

\fullversion{
\begin{proof}
Since $f_t(s) \geq |s|^p$ for any $s \in \R$, one has the following sequence of inequalities for any $x \in \R^d$:
\begin{eqnarray*}
c \cdot x(t) + \|A x(t) - b\|_p^p & \leq & c \cdot x(t) + \sum_i f_t((A x(t) - b)_i) \\
& \leq & c \cdot x + \sum_i f_t((A x - b)_i) \\
& \leq & c \cdot x + \|A x - b\|_p^p + n \|f_t - |\cdot|^p\|_{\infty} ~.
\end{eqnarray*}
It only remains to verify that $\|f_t - |\cdot|^p\|_{\infty} = (\frac{p}{2} - 1) t^p$.
\end{proof}
}
%

\begin{theorem} \label{th:main}
With the initial parameter $t_{0}=\max((2c^{\top}(A^{\top}A)^{\dagger}c)^{\frac{1}{p-1}},2\|b\|_{2})$,
the algorithm finds a point $x(t_{k})$ such that
\[
c\cdot x(t_{k})+\|Ax(t_{k})-b\|_{p}^{p}\leq\varepsilon+\min_{x\in\R^{d}}c\cdot x+\|Ax-b\|_{p}^{p}
\]
in $k=O(1)\cdot\log(\frac{np}{\varepsilon}t_{0}^{p})$ phases.

Furthermore,
each run of AGD terminates after $O_{p}(n^{|\frac{1}{2}-\frac{1}{p}|}\log(n))$
iterations and each step of AGD involves applying $P_t$ or $P'_t$ constant many times plus some linear time work.
\end{theorem}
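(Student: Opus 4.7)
The plan is to combine the closed-form initialization from Lemma \ref{lem:init} with the decay guarantee from Lemma \ref{lem:approx}, and execute roughly $\log(npt_0^p/\varepsilon)$ phases, each of which is one AGD run whose iteration count is controlled by Lemmas \ref{lem:AGD} and \ref{lem:condg}. The outer phase count is immediate: Lemma \ref{lem:approx} yields an $\varepsilon$-optimal point as soon as $n(\tfrac{p}{2}-1)t_k^p\leq\varepsilon$. With $t_k=(1-h)^k t_0$ and $h=\tfrac{1}{2p}$, one has $\log(1/(1-h))=\Theta(1/p)$, so the required $k$ is $\Theta(p\log(npt_0^p/\varepsilon))=O(1)\cdot\log(npt_0^p/\varepsilon)$ after absorbing the $p$-dependence into the $O(1)$.

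The per-phase analysis rests on the key identification that $y(t_{k+1}):=P_{t_k}^{-1}x(t_{k+1})$ is the minimizer of $g_{t_k}$ on the image of $Q_{t_k}$. Indeed, Lemma \ref{lem:cond} applied with $h=1/(2p)$ places $s(t_{k+1})$ inside $\cN_{s(t_k)}(\gamma)$, so the quadratic extension $\tilde f_{t_k}$ agrees with $f_{(1-h)t_k}$ at $s(t_{k+1})$, and the common-minimizer remark following Definition \ref{def:1} shows that $x(t_{k+1})$ solves both the extended and unextended problems. Lemma \ref{lem:condg} then states $\nabla^2 g_{t_k}$ is $\kappa$-conditioned on the image of $Q_{t_k}$ with $\kappa=O_p(n^{|1-2/p|})$, and since $g_{t_k}$ depends only on $P_{t_k}y$ the AGD dynamics can be confined to that subspace. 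Lemma \ref{lem:AGD} thus produces an $\varepsilon'$-relative minimizer in $O(\sqrt{\kappa}\log(\kappa/\varepsilon'))=O_p(n^{|1/2-1/p|}\log(n/\varepsilon'))$ iterations; taking $\varepsilon'$ to be $1/\mathrm{poly}(n)$ yields the advertised $O_p(n^{|1/2-1/p|}\log n)$ per-phase bound.

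The main obstacle is error propagation. The algorithm actually works with an approximate $\tilde x(t_k)$, and one must check both that Lemma \ref{lem:cond}'s neighborhood hypothesis remains valid at the next phase and that the accumulated excess loss across all phases stays below $\varepsilon$. The strong-convexity half of Lemma \ref{lem:condg} converts the functional AGD guarantee into $\|\tilde y-y(t_{k+1})\|_{Q_{t_k}}^2\leq 2\varepsilon'(g_{t_k}(y_0)-g_{t_k}^*)$; multiplying by $P_{t_k}$ then bounds $\|s(\tilde x)-s(t_{k+1})\|_{D_{t_k}}$. Choosing $\varepsilon'$ a sufficiently small polynomial in $n$, $t_0$, and $1/\varepsilon$, this perturbation is absorbed into a mild enlargement of the constant multiplying $\gamma$, preserving Lemma \ref{lem:cond} for every subsequent phase; a short perturbation computation bounds the cumulative excess objective by $O(K\varepsilon')\leq\varepsilon/2$. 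Because $\log(1/\varepsilon')=O(\log n+\log t_0+\log(1/\varepsilon))$, the extra log factors merge into the outer-loop count (up to constants) rather than enlarging the inner iteration bound.

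Finally, the per-iteration cost: $\nabla g_{t_k}(y)=P_{t_k}^{\top}(c+A^{\top}\nabla\tilde f_{t_k}(s(P_{t_k}y)))$ requires a constant number of multiplications by $P_{t_k}$ (or, using the sparse variant $P_{t_k}'$, a constant number of solves of the system $A^{\top}D_{t_k}A\, z = \cdot$), together with $O(n)$ coordinate-wise evaluations of $\nabla\tilde f_{t_k}$, which matches the stated per-iteration complexity.
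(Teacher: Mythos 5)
Your proposal follows essentially the same route as the paper: initialization from Lemma~\ref{lem:init}, termination from Lemma~\ref{lem:approx}, conditioning from Lemma~\ref{lem:condg}, and the AGD rate from Lemma~\ref{lem:AGD}, with the observation that the quadratic-extended objective $g_{t_k}$ and the unextended one share the minimizer $y(t_{k+1})$ because $s(t_{k+1}) \in \cN_{s(t_k)}(\gamma)$. Two small points are worth flagging.

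First, a minor arithmetic slip in the phase count: with $t_{\mathrm{final}} = (\varepsilon/(np))^{1/p}$ and $\log\bigl(1/(1-h)\bigr) = \Theta(1/p)$, the number of phases is
$$
\frac{\log(t_0/t_{\mathrm{final}})}{\Theta(1/p)} = \Theta(p)\cdot\Bigl(\log t_0 + \tfrac{1}{p}\log\tfrac{np}{\varepsilon}\Bigr) = \Theta\Bigl(\log\bigl(\tfrac{np}{\varepsilon} t_0^p\bigr)\Bigr),
$$
with a genuinely universal constant (the $p$ cancels because $\log(t_0^p) = p\log t_0$). Your $\Theta\bigl(p\log(np t_0^p/\varepsilon)\bigr)$ is off by a factor of $p$, and one should not need to ``absorb'' any $p$-dependence into the $O(1)$.

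Second, your error-propagation discussion takes a more conservative line than the paper needs. You suggest choosing $\varepsilon'$ as a small polynomial in $n$, $t_0$, and $1/\varepsilon$ so that a \emph{cumulative} excess loss across phases stays below $\varepsilon/2$; but $\log(1/\varepsilon')$ then contains $\log(1/\varepsilon)$ and $\log t_0$, which would place those log factors inside the per-phase iteration count, contradicting the theorem's explicit claim of $O_p(n^{|1/2-1/p|}\log n)$ per phase. The paper avoids this by noting that no cumulative error argument is needed: the final $\varepsilon$-approximation comes from Lemma~\ref{lem:approx} applied at the last $t_k$, and within each phase one only needs to land in $\cN_{s(t_{k+1})}(c\gamma)$ for a fixed constant $c$. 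Since $\gamma\propto t^{p/2}$, this is a \emph{relative} accuracy requirement that does not degrade as $t$ shrinks, so a fixed AGD tolerance $\varepsilon' = 1/\mathrm{poly}(n)$ (via the $P_t$ preconditioning, from $\ell_\infty$ to $\ell_2$, as the paper remarks) suffices at every phase, yielding exactly the $\log n$ factor and nothing more.

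Everything else — the identification of $y(t_{k+1})$ with the minimizer of $g_{t_k}$, the confinement of AGD to the image of $Q_{t_k}$, and the per-iteration cost accounting — matches the paper.
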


\begin{proof}
Lemma \ref{lem:init} shows that $x(t_{0})$ can be computed by a linear system.
Lemma \ref{lem:approx} shows that $x(t_{k})$ satisfies the requirement for
$t_{k}\leq(\frac{\varepsilon}{np})^{\frac{1}{p}}$. Since $t_{k}$
is decreased by $1-\frac{1}{2p}$ factor in each step, this gives
the bound on the number of phases.

For the number of iterations in each phase, Lemma \ref{lem:condg} shows that
the condition number $\kappa$ of the problem is $O_{p}(n^{|1-\frac{2}{p}|})$.
Therefore, AGD decreases the $\ell_{2}$ distance by a constant factor
for every $O_{p}(n^{|1/2-1/p|})$ iterations (Lemma \ref{lem:AGD}). Note that $x(t_{i+1})$
is used only for constructing the quadratic extension. In particular,
we only need to find $x\in\cN_{s(t_{i+1})}(c\cdot\gamma)$ for some
constant $c$. Due to the preconditioning $P_{t}$, we only need find
$y$ that is closer to $y(t_{i+1})$ in $\ell_{\infty}$ norm by some
$O_p(1)$ constant. This can be achieved by decreasing $\ell_{2}$
norm by $O_p(1/n^{O(1)})$. This gives the extra $O_p(\log(n))$ factor.
\end{proof}
\red{We do not give an explicit explanation on how small error we need to take for AGD because the number of iterations depends on $\log(1/\eps)$ and it is easy to see that $\eps = O_p(1/n^{O(1)})$ is enough and it will only affect the final runtime by a logarithmic factor.}

\subsection{Proof of Lemma \ref{lem:cond}} \label{sec:prooflemcond}
To shorten notation we write $H_t := \nabla^2 f_t(s(t))$.
\fullversion{We start with a lemma showing that $x(t)$ satisfies a certain differential
equation.}
\begin{lemma}[Dynamic of the homotopy path]
\label{lem:dynamic_path} One has
\mideq{\frac{dx_{t}}{dt}= - (A^{\top}H_{t}A)^{\dagger}A^{\top}(\frac{d}{dt}f_{t}')(s_{t}) ~.}
\end{lemma}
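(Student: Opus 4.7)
The plan is to obtain the ODE by implicit differentiation of the first-order optimality condition satisfied by $x(t)$. Since $f_t$ is strictly convex for $t>0$ (the quadratic piece has coefficient $\frac{p}{2}t^{p-2}>0$, and outside $[-t,t]$ we have $f_t''(s)=p(p-1)|s|^{p-2}>0$), the objective in \eqref{eq:xt_def} is strictly convex on $\ker(A)^{\perp}$, so $x(t)$ is uniquely defined. The unconstrained gradient of the objective at $x$ is $c+A^{\top}f_t'(Ax-b)$; both summands lie in $\ker(A)^{\perp}$ (the first by the standing assumption $c\in\ker(A)^{\perp}$, the second because it lies in $\mathrm{range}(A^{\top})$), so the constrained minimizer $x(t)$ is characterized by
\[
c + A^{\top} f_t'(s(t)) = 0,
\]
where $s(t)=Ax(t)-b$.

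Next I would invoke the implicit function theorem to conclude that $t\mapsto x(t)$ is $C^1$ on $(0,\infty)$: the map $F(t,x) := c+A^{\top}f_t'(Ax-b)$ is $C^1$ in both arguments (by the regularity stated just after the definition of $f_t$), and its Jacobian in the $x$ variable, $A^{\top}H_t A$, has range $\ker(A)^{\perp}$ and is injective on $\ker(A)^{\perp}$ (since $H_t\succ 0$ implies $\ker(A^{\top}H_t A)=\ker(A)$), so it acts as an isomorphism on the affine subspace $\ker(A)^{\perp}$ on which $x(t)$ is constrained to live. Thus $x(t)$ is differentiable and $\frac{dx(t)}{dt}\in\ker(A)^{\perp}$.

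Now differentiate the optimality condition with respect to $t$. Writing $\frac{d}{dt}f_t'$ for the partial derivative in the parameter $t$ (holding $s$ fixed) and applying the chain rule to the composition $t\mapsto f_t'(s(t))$, we get
\[
0 = \frac{d}{dt}\bigl[c + A^{\top} f_t'(s(t))\bigr]
= A^{\top}\Bigl(\tfrac{d}{dt}f_t'\Bigr)(s(t)) + A^{\top} H_t A\,\frac{dx(t)}{dt}.
\]
Rearranging yields $A^{\top}H_t A\,\frac{dx(t)}{dt} = -A^{\top}\bigl(\tfrac{d}{dt}f_t'\bigr)(s(t))$. The right-hand side lies in $\mathrm{range}(A^{\top})=\ker(A)^{\perp}=\mathrm{range}(A^{\top}H_t A)$, so the equation is consistent; and because $\frac{dx(t)}{dt}\in\ker(A)^{\perp}$, it is the unique solution picked out by the pseudoinverse:
\[
\frac{dx(t)}{dt} = -(A^{\top}H_t A)^{\dagger} A^{\top}\Bigl(\tfrac{d}{dt}f_t'\Bigr)(s(t)),
\]
which is the claimed formula.

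The routine calculation is the chain rule; the only subtlety worth care is handling the pseudoinverse and the subspace constraint, but that is settled once we observe $\ker(A^{\top}H_t A)=\ker(A)$ and $\frac{dx(t)}{dt}\in\ker(A)^{\perp}$, both of which follow from strict positivity of $H_t$ for $t>0$. I would also note in passing that the piecewise definition of $f_t$ introduces no issue at the seam $|s|=t$: both pieces agree up to second order at the boundary (that was built in to make $f_t$ and $f_t'$ continuous in $s$ and $t$), so partial derivatives in $t$ are well-defined everywhere.
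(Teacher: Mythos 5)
Your proof takes essentially the same route as the paper: write the first-order optimality condition $c + A^{\top}f_t'(s(t))=0$, differentiate implicitly in $t$, and recover $\frac{dx(t)}{dt}$ via the pseudoinverse of $A^{\top}H_tA$ using $\ker(A^{\top}H_tA)=\ker(A)$ together with $x(t)\in\ker(A)^{\perp}$; the explicit appeal to the implicit function theorem to justify differentiability is a welcome extra step that the paper leaves implicit. One small correction to your closing remark: the two pieces of $f_t$ do \emph{not} agree up to second order at the seam $|s|=t$ — the one-sided second derivatives there are $pt^{p-2}$ and $p(p-1)t^{p-2}$, which differ for $p\neq 2$ (the paper only claims $C^1$ regularity, not $C^2$). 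So both your argument and the paper's tacitly differentiate away from the exceptional set where some $s_i(t)=\pm t$; stating the pieces match to second order overclaims and, if true, would make $H_t$ continuous at the seam, which it is not.
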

\begin{proof}
The KKT condition for $x(t)$ is given by
\mideq{c+A^{\top}f_{t}'(s({t}))=0.}
Taking derivatives with respect to $t$ on both sides, we have that
\mideq{A^{\top}H_{t}A\frac{dx({t})}{dt}+A^{\top}(\frac{d}{dt}f_{t}')(s({t}))=0 ~.}
The proof is concluded by noting that $\mathrm{ker}(A^{\top}H_{t}A)=\mathrm{ker}(A)$ and recalling that $x(t) \in \mathrm{ker}(A)^{\perp}$.
\end{proof}
Using the differential equation, we can bound how fast $s(t)$ is
moving.
\begin{lemma}[Speed of the homotopy path]
\label{lem:s_speed} For any $i \in [n]$ one has
\begin{align*}
\left|\frac{ds_{i}(t)}{dt}\right| & \leq \frac{p^2}{p-1} \sqrt{n} \left(t/|s_{i}(t)|\right)^{\frac{p-2}{2}} .
\end{align*}
\end{lemma}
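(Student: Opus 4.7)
The plan is to leverage Lemma~\ref{lem:dynamic_path} together with an orthogonal projection identity and a Cauchy--Schwarz-type energy bound. Setting $v := ds/dt = A\,\frac{dx_t}{dt}$ and $u := (\frac{d}{dt}f_t')(s(t))$, Lemma~\ref{lem:dynamic_path} reads $v = -A(A^\top H_t A)^\dagger A^\top u$. The key observation is that $P := H_t^{1/2} A (A^\top H_t A)^\dagger A^\top H_t^{1/2}$ is the orthogonal projection onto $\mathrm{range}(H_t^{1/2} A)$, so
\begin{align*}
H_t^{1/2} v = -P\, H_t^{-1/2} u.
\end{align*}
Applying $\|P\|\leq 1$ gives the energy bound $\|v\|_{H_t} \leq \|u\|_{H_t^{-1}}$, and reading off the $i$-th coordinate yields the pointwise estimate
\begin{align*}
|v_i| \leq \frac{\|u\|_{H_t^{-1}}}{\sqrt{(H_t)_{ii}}}.
\end{align*}

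Next I would compute $u$ explicitly from the piecewise formula for $f_t$. On $\{|s|\leq t\}$, $f_t'(s)=p\,t^{p-2}s$ depends on $t$, giving $u_i = p(p-2)t^{p-3}s_i$; on $\{|s|>t\}$, $f_t'(s)=p\,\mathrm{sign}(s)|s|^{p-1}$ is $t$-independent, so $u_i = 0$. Hence only indices with $|s_i|\leq t$ contribute to $\|u\|_{H_t^{-1}}^2$, and for each such $i$, using $(H_t)_{ii}=p t^{p-2}$ and $s_i^2 \leq t^2$,
\begin{align*}
\frac{u_i^2}{(H_t)_{ii}} = p(p-2)^2 t^{p-4} s_i^2 \leq p(p-2)^2 t^{p-2}.
\end{align*}
Summing over at most $n$ such indices yields $\|u\|_{H_t^{-1}} \leq |p-2|\sqrt{np}\,t^{(p-2)/2}$.

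To conclude, I would case-split on $|s_i(t)|$. If $|s_i|>t$ then $(H_t)_{ii}=p(p-1)|s_i|^{p-2}$, and substituting into the pointwise estimate yields $|v_i|\leq \frac{|p-2|}{\sqrt{p-1}}\sqrt{n}\,(t/|s_i|)^{(p-2)/2}$, which is dominated by the claimed $\frac{p^2}{p-1}\sqrt{n}\,(t/|s_i|)^{(p-2)/2}$ since $|p-2|\sqrt{p-1}\leq p^2$. If $|s_i|\leq t$ then $(H_t)_{ii}=p t^{p-2}$, and the bound becomes $|v_i|\leq |p-2|\sqrt{n}$; for $p\geq 2$ the factor $(t/|s_i|)^{(p-2)/2}\geq 1$ makes the claimed inequality immediate after absorbing $|p-2|$ into $p^2/(p-1)$.

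The main subtlety is packaging both regimes into the single clean form stated in the lemma, and in particular verifying that the loose prefactor $p^2/(p-1)$ is large enough to absorb both the $|p-2|$ arising in the $|s_i|\leq t$ case and the $|p-2|/\sqrt{p-1}$ arising in the $|s_i|>t$ case. The specific form of the stated bound is in fact tailored to the downstream application in Lemma~\ref{lem:cond}: when multiplied by the chain-rule factor $|s_i|^{(p-2)/2}$ (appearing upon converting $\frac{ds_i}{dt}$ into $\frac{d}{dt}|s_i|^{p/2}$), the right-hand side collapses to the clean integrand $\frac{p^3}{2(p-1)}\sqrt{n}\,t^{(p-2)/2}$, whose integration over $[(1-h)t,t]$ recovers the radius $\gamma$ of the neighborhood $\cN_{s(t)}(\gamma)$.
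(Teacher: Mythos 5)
Your proof follows the paper's argument step for step: Lemma~\ref{lem:dynamic_path}, the orthogonal-projection contraction $\|H_t^{1/2}\frac{ds}{dt}\|_2 \leq \|H_t^{-1/2}(\frac{d}{dt}f_t')(s(t))\|_2$, the explicit computation of $\frac{d}{dt}f_t'$ (nonzero only where $|s_i|\leq t$) giving $\|H_t^{-1/2}(\frac{d}{dt}f_t')(s(t))\|_2\leq \sqrt{p}\,|p-2|\sqrt n\, t^{(p-2)/2}$, and then a per-coordinate read-off with a case split on $|s_i|$ versus $t$. The only difference is notational (naming $u$, $v$, $P$); mathematically it is the same proof.

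One thing you half-acknowledge deserves to be stated plainly. In the regime $|s_i|\leq t$ you derive the $s_i$-uniform bound $|v_i|\leq|p-2|\sqrt n$ and then say this implies the claimed inequality ``for $p\geq 2$''. You are right to be careful: for $1<p<2$ the factor $(t/|s_i|)^{(p-2)/2}=(|s_i|/t)^{(2-p)/2}$ is $\leq 1$ and tends to $0$ as $|s_i|\to 0$, whereas your upper bound on $|v_i|$ does not decay, so the lemma's inequality is not a consequence of what you proved in that regime. The paper's own proof dismisses this with the single line ``Simplifying and combining both cases, we have the result,'' so the gap is inherited from the source rather than introduced by you; but to call the proof complete one would need either an additional argument showing $|v_i|$ is small when $|s_i|$ is small (not obvious, since the projection $P$ mixes coordinates, so $u_i\to 0$ does not force $v_i\to 0$), or a slight restatement of the lemma (e.g.\ with a $\max(1,\cdot)$) together with a direct treatment of the downstream step \eqref{eq:yetagainaderiv}, where for $p<2$ the map $s_i\mapsto|s_i|^{p/2}$ is only H\"older and the chain-rule factor $|s_i|^{(p-2)/2}$ blows up near zero.
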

\begin{proof}
Using Lemma \ref{lem:dynamic_path} and that $H_{t}^{1/2}A(A^{\top}H_{t}A)^{\dagger}A^{\top}H_{t}^{1/2}$
is a projection matrix, we have that
\begin{align}
\norm{H_{t}^{1/2}\frac{ds({t})}{dt}}_{2} & =\norm{H_{t}^{1/2}A(A^{\top}H_{t}A)^{\dagger}A^{\top}(\frac{d}{dt}f_{t}')(s({t}))}_{2}\nonumber \\
 & \leq\norm{H_{t}^{-1/2}(\frac{d}{dt}f_{t}')(s({t}))}_{2} ~. \label{eq:Hdx}
\end{align}
To estimate the last term, we use the formula of $f_{t}$ and note
that
\begin{align*}
f_{t}'(s) & =\begin{cases}
pt^{p-2}s & \text{if }|s|\leq t\\
p|s|^{p-2}s & \text{otherwise}
\end{cases},\\
f_{t}''(s) & =\begin{cases}
pt^{p-2} & \text{if }|s|\leq t\\
p(p-1)|s|^{p-2} & \text{otherwise}
\end{cases},\\
\frac{d}{dt}f_{t}'(s) & =\begin{cases}
p(p-2)t^{p-3}s & \text{if }|s|\leq t\\
0 & \text{otherwise}
\end{cases}.
\end{align*}
Therefore, we have that
\[
\left|(f_{t}''(s_{i}(t)))^{-1/2}\frac{d}{dt}f_{t}'(s_{i}(t))\right|\leq\begin{cases}
\sqrt{p}\left|p-2\right|t^{\frac{p-2}{2}} & \text{if }|s_{i}(t)|\leq t\\
0 & \text{otherwise}
\end{cases}.
\]
Putting this into (\ref{eq:Hdx}) gives
\mideq{\norm{H_{t}^{1/2}\frac{ds_{t}}{dt}}_{2}\leq\sqrt{p}\left|p-2\right|t^{\frac{p-2}{2}}\sqrt{n}.}
Hence, we have that
\begin{align*}
\left|\frac{ds_{i}(t)}{dt}\right| & \leq\left|p-2\right|\sqrt{n}\cdot\begin{cases}
1 & \text{if }|s_{i}(t)|\leq t ~,\\
(p-1)^{-\frac{1}{2}}\left(t/|s_{i}(t)|\right)^{\frac{p-2}{2}} & \text{else.}
\end{cases}
\end{align*}
Simplifying and combining both case, we have the result.
\end{proof}

Equipped with the above lemma we can now move to the proof of Lemma \ref{lem:cond}. 

\begin{proof} of Lemma \ref{lem:cond}.
We start by showing that $s((1-h) t) \in \cN_{s(t)}(\gamma)$. First Lemma \ref{lem:s_speed} gives
\begin{align}
\left|\frac{ds_{i}(t)^{p/2}}{dt}\right| & \leq \frac{p^3}{p-1} \sqrt{n} t^{\frac{p-2}{2}} . \label{eq:yetagainaderiv}
\end{align} 
Thus we have:
\begin{align*}
\left|s_{i}(t)^{p/2}-s_{i}((1-h)t)^{p/2}\right| & \leq\int_{(1-h)t}^{t}\left|\frac{ds_{i}(t')^{p/2}}{dt'}\right|dt'\\
 & \leq\frac{p^{3}}{p-1}\sqrt{n}t^{\frac{p-2}{2}}\cdot ht=\frac{p^{3}}{p-1}\sqrt{n}t^{\frac{p}{2}}h
\end{align*}
This shows that $s((1-h) t) \in \cN_{s(t)}(\gamma)$.


Next we need to argue about the range of $f_{(1-h)t}''(s_i)$ for $s' \in \cN_{s(t)}(\gamma)$. First note that
$$\min(1, p-1) p \max(((1-h)t)^{p/2}, |s'_i|^{p/2})^{2 - 4/p} \leq f_{(1-h)t}''(s'_i) \leq \max(1, p-1) p \max(((1-h)t)^{p/2}, |s'_i|^{p/2})^{2 - 4/p} ~.$$
Using $h = \frac1{2p}$, we have 
\mideq{\alpha_i \leq f_{(1-h)t}''(s'_i) \leq \beta_i ~,}
where $\xi = \mathrm{sign}(p-2)$ and
\begin{align*}
\alpha_i & := \frac{p-1}{2} \max(t^{p/2}, |s_i(t)|^{p/2} - \xi \gamma)^{2 - 4/p} ~, \\
\beta_i & := p^2 \max(t^{p/2}, |s_i(t)|^{p/2} + \xi \gamma)^{2 - 4/p} ~.
\end{align*}
Noting that for any $a >0, b \geq 0$ one has $\frac{\max(a,b+\gamma)}{\max(a,b-\gamma)} \leq \frac{a+2\gamma}{a}$ we get
$$\frac{\beta_i}{\alpha_i} \leq \frac{2p^2}{p-1} \left( \frac{t^{p/2} + 2 \gamma}{t^{p/2}}\right)^{|2-4/p|} ~,$$
which concludes the proof of Lemma \ref{lem:cond}. 
\end{proof}
\section{Input sparsity algorithm} \label{sec:inputsparsity}
In this section we replace AGD in our homotopy method by \emph{mini-batch
Katyusha}, \cite{Zey17}. This is the current fastest algorithm for minimizing convex functions of the form $\sum_i f_i(x)$. \red{To just obtain an input-sparsity time algorithm, there are many other options such as \cite{lin2014accelerated,johnson2013accelerating,lin2015universal,frostig2015regularizing}. The key benefit of these algorithms is that its runtime has smaller dependence on $n$ compared to AGD.}
\begin{lemma}[Theorem 5.2 in \cite{Zey17}]
\label{lem:Katyusha_mini} For $i \in [n]$ let $F_{i}$ be a $L_{i}$ smooth convex
function on $\R^{d}$, and let $F=\sum_{i\in[n]}F_{i}$. Suppose
that $F$ is $\sigma$ strongly convex and $L$ smooth. Given an initial
point $x_{0}$, an error parameter $0<\varepsilon<\frac{1}{2}$, and
a batch-size $b$, the \emph{mini-batch Katyusha} algorithm outputs
$x$ such that
\[
\E F(x)-\min_{x}F(x)\leq\varepsilon(F(x_{0})-\min_{x}F(x))
\]
in $O(\frac{n}{b}+\sqrt{\frac{L}{\sigma}}+\frac{1}{b}\sqrt{\frac{n\cdot\sum L_{i}}{\sigma}})\log(\frac{1}{\varepsilon})$
iterations. Each iteration involves computing $\sum_{i\in S}\nabla F_{i}(x)$
where $S$ is a set of $b$ numbers in $[n]$ chosen at random with
replacement.
\end{lemma}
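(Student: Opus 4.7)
(Plan.)
Because this lemma is quoted verbatim as Theorem~5.2 of \cite{Zey17}, the natural approach is to refer to the original proof; nevertheless, here is the strategy I would follow to reconstruct it. The plan is to instantiate the Katyusha accelerated-SVRG framework with mini-batch gradient estimators and then optimize the batch-size parameter.

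First I would set up the standard three-sequence Katyusha iteration: maintain a snapshot $\tilde{x}$ updated every epoch of length $m$, together with momentum iterates $y_k,z_k$ and a convex combination $x_k = \tau_1 z_k + \tau_2 \tilde{x} + (1-\tau_1-\tau_2) y_k$, where $\tau_2$ is the characteristic ``negative momentum'' toward the snapshot that distinguishes Katyusha from plain accelerated SVRG. At snapshot points I would precompute the full gradient $\nabla F(\tilde{x})$ in $O(n)$ work, and at each inner step form the mini-batch variance-reduced estimator
\[
\tilde{\nabla}_k = \nabla F(\tilde{x}) + \frac{1}{b}\sum_{i\in S_k}\bigl(\nabla F_i(x_k) - \nabla F_i(\tilde{x})\bigr),
\]
where $S_k$ is a size-$b$ sample drawn uniformly with replacement from $[n]$.

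Next I would carry out the per-step progress analysis. The key computation is a bound on $\E\|\tilde{\nabla}_k - \nabla F(x_k)\|^2$; sampling with replacement yields a variance of order $\tfrac{1}{b\, n}\sum_i L_i \cdot (F(\tilde{x})-F(x_k) - \langle\nabla F(x_k),\tilde{x}-x_k\rangle)$ by the usual co-coercivity identity for $L_i$-smooth convex summands. Plugging this into the Katyusha potential function $\Phi_k = \alpha (F(y_k) - F_\ast) + \tfrac{1}{2\eta}\|z_k - x^\ast\|^2$ and choosing $\tau_1,\tau_2,\eta$ as functions of $L$, $\tfrac{1}{b}\sum L_i$ and $\sigma$ gives a contraction $\E\Phi_{k+1} \le (1-\rho)\Phi_k$ per epoch with
\[
\rho = \Theta\!\left(\min\!\left(1,\ \sqrt{\sigma/L},\ b\sqrt{\sigma\,n/\textstyle\sum L_i}\right)\right),
\]
after optimizing the step size. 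Converting ``epochs of length $\Theta(n/b)$'' to ``individual iterations'' gives the three-term bound stated in the lemma.

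The main obstacle, as in the original Katyusha analysis, is the coupling lemma that transforms the negatively-momentum Katyusha step into a single-step descent inequality compatible with Nesterov-style acceleration: one must show that the $\tau_2$-contribution toward the snapshot cancels the variance term arising from the stochastic estimator. Once that algebraic identity is established, the rest is a routine telescoping over epochs and an application of Jensen's inequality to pass from the expected potential to the stated expected function-value bound.
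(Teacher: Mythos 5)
The paper does not prove this lemma --- it is quoted directly from Theorem~5.2 of \cite{Zey17}, so there is no in-paper proof to compare against. Your reconstruction plan is structurally faithful to that reference: the three-sequence Katyusha iteration with negative momentum toward the snapshot, the mini-batch SVRG-style estimator, the co-coercivity variance bound of order $\frac{1}{b}\sum_i L_i\bigl(F(\tilde{x})-F(x_k)-\langle\nabla F(x_k),\tilde{x}-x_k\rangle\bigr)$, and the Lyapunov-function contraction with parameters optimized in $L$, $\sum L_i$, $\sigma$, $b$ are precisely the ingredients of the cited proof, and your $\rho$ translates correctly to the stated three-term iteration count after accounting for epoch length $\Theta(n/b)$. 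One small normalization slip: under the paper's convention $F=\sum_{i\in[n]}F_i$ (not a sample average), the unbiased estimator must be $\tilde{\nabla}_k=\nabla F(\tilde{x})+\frac{n}{b}\sum_{i\in S_k}\bigl(\nabla F_i(x_k)-\nabla F_i(\tilde{x})\bigr)$; the $\frac{1}{b}$ prefactor you wrote would only be unbiased if $F=\frac{1}{n}\sum_i F_i$. Beyond that, the sketch is necessarily high-level (you defer the coupling inequality and the exact parameter algebra to the original), but it is the right approach.
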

\fullversion{
\begin{remark}
Instead of strongly convex in $\R^{n}$, it suffices to have a
subspace $H$ such that $F$ is $\sigma$-strongly convex on $H^{\perp}$
and that $F_{i}$ is constant on the subspace $H$, namely, $F_{i}(x+y)=F_{i}(x)$
for all $x\in\R^{d}$, $y\in H$ and $i\in[n]$. 
\end{remark}}
In our case, we use $F_{i}(y)=c\cdot P_{t}''y+\tilde{f}_{t,i}(a_{i}\cdot P_{t}''y-b_{i})$
where $P_{t}''$ is a new preconditioner to be defined. We cannot
use $P_{t}$ or $P_{t}'$ because they are too costly for input sparsity
time algorithms.
\begin{lemma}
\label{lem:SVRG_F}Define $P_{t}''=(A^{\top}W_{t}A)^{\dagger}A^{\top}\sqrt{W_{t}}$
where $A^{\top}W_{t}A$ is a spectral sparsifier of $A^{\top}D_{t}A$,
namely, $W_{t}$ is a diagonal matrix with $O(d)$ non-zeros
such that 
\[
\frac{1}{2}A^{\top}D_{t}A\preceq A^{\top}W_{t}A\preceq2A^{\top}D_{t}A.
\]
Let $F_{i}(y)=c\cdot P_{t}''y+\tilde{f}_{t,i}(a_{i}\cdot P_{t}''y-b_{i})$.
Then, each iteration of mini-batch Katyusha on $\sum_{i}F_{i}$ takes
$\tilde{O}(\nnz(A)\frac{b}{n}+d^{2})$ expected time plus a $\tilde{O}(\nnz(A)+d^{\omega})$
preprocessing time
\end{lemma}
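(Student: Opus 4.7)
The plan is to run mini-batch Katyusha entirely in the $d$-dimensional image space under $P_t''$: instead of storing the ambient iterates $y,z,\bar y,\tilde x\in\R^n$ I would store only their images $u_y:=P_t''y,\, u_z,\, u_{\bar y},\, u_{\tilde x}\in\R^d$. The structural identity driving everything is that $P_t''(P_t'')^\top = (A^\top W_t A)^\dagger =: M$ on the range of $A^\top$, and that $F_i$ depends on $y$ only through $P_t''y$. Consequently, for $u=P_t''y$,
\mideq{P_t''\nabla F_i(y) \;=\; M\bigl(c' + a_i\,\tilde f_{t,i}'(a_i^\top u - b_i)\bigr),}
where $c'$ is whatever fixed vector the linear term contributes. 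Every direction we ever push onto $u$ is a linear combination of $Mc'$ and vectors of the form $Ma_i$, which is cheap once $M$ has been assembled.

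For preprocessing I would first build a spectral sparsifier $W_t$ of $A^\top D_t A$ with $\tilde O(d)$ nonzero diagonal entries, using leverage-score sampling in $\tilde O(\nnz(A))$ time. Because only $\tilde O(d)$ rows of $A$ are active, $A^\top W_t A$ can be formed in $\tilde O(d^\omega)$ time as a product of two $d\times d$ matrices obtained by stacking the active rows, and then inverted in $O(d^\omega)$ to produce $M$, which I keep in memory. Whenever Katyusha refreshes its snapshot $\tilde u$, I would also precompute the residual vector $\tilde r_i := a_i^\top\tilde u - b_i$ for $i\in[n]$ and the snapshot direction $P_t''\nabla F(\tilde x)$, each at cost $\tilde O(\nnz(A)+d^2)$; amortized over $\Theta(n/b)$ inner iterations between snapshots, this fits inside the claimed per-iteration bound.

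Per inner iteration with batch $S$ of size $b$ I would then: (i) read the residuals $r_i = a_i^\top u - b_i$ for $i\in S$ in expected time $\sum_{i\in S}|a_i|_0 = \tilde O(\nnz(A)\,b/n)$; (ii) assemble the aggregated vector $v = \sum_{i\in S}\bigl(\tilde f_{t,i}'(r_i)-\tilde f_{t,i}'(\tilde r_i)\bigr)\,a_i\in\R^d$ in the same time; (iii) perform a single dense matvec $Mv$ in $O(d^2)$; and (iv) update $u$ (and the $O(1)$ other stored $d$-dimensional images) by linear combinations of $Mv$, the cached snapshot direction, and the existing stored vectors, at total cost $O(d)$. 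This gives $\tilde O(\nnz(A)\,b/n + d^2)$ expected time per iteration, as claimed.

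The main obstacle is conceptual rather than arithmetic: I need to certify that no step of mini-batch Katyusha ever forces me to materialize a vector in $\R^n$. Since every Katyusha update is a linear combination augmented by a single stochastic-gradient step, and since each such gradient lies in the $d$-dimensional range of $(P_t'')^\top$, the iterate family is closed once it is initialized in this range; every quantity needed in the loop then reduces to $d$-vector arithmetic together with batched inner products $a_i^\top u$. Finally, the $(n-d)$-dimensional kernel of $P_t''$ is precisely a subspace $H$ on which each $F_i$ is constant, so the relaxed strong-convexity hypothesis noted after Lemma~\ref{lem:Katyusha_mini} applies with $H=\ker(P_t'')$, and the Katyusha guarantee transfers to this implicit $d$-dimensional representation.
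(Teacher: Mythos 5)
Your proof is correct and takes essentially the same approach as the paper: build a spectral sparsifier $W_t$ in $\tilde{O}(\nnz(A)+d^{\omega})$, precompute $(A^{\top}W_tA)^{\dagger}$ as a dense $d\times d$ matrix, and charge each inner iteration one dense $d\times d$ matvec plus the batched sparse residual work $\sum_{i\in S}|a_i|_0$. The one implementation refinement is that you run Katyusha in the $d$-dimensional image $u=P_t''y$ rather than on sparse $\R^n$ iterates supported on the $O(d)$ active coordinates of $W_t$; this is equivalent, and your explicit closure argument (iterates stay in $\mathrm{range}((P_t'')^{\top})$, so Katyusha's relaxed strong-convexity remark applies with $H=\ker(P_t'')$) makes a point the paper leaves tacit.
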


\begin{proof}
The diagonal $W_{t}$ can be find in $\tilde{O}(\nnz(A)+d^{\omega})$ time \cite{lee2017sdp}.
\red{To get a slightly denser diagonal, one can use \cite{cohen2015uniform,spielman2011graph,drineas2006subspace} instead.}
Also, we can precompute $(A^{\top}W_{t}A)^{\dagger}$ and
store it as a dense matrix in each phase. This takes $O(d^{\omega})$
time.

To compute
\[
\sum_{i\in S}\nabla F_{i}(x)=|S|P_{t}''^{\top} c+ P_t'' \sum_{i\in S}\tilde{f}_{t,i}'(a_{i}\cdot P_{t}''y-b_{i})a_{i},
\]
we can compute $P_{t}''^{\top} c$ and $P_{t}''y$ first. Since
we have already computed $(A^{\top}W_{t}A)^{\dagger}$, it only takes
$O(d^{2})$ to compute both $P_{t}''^{\top} c$ and $P_{t}''y$. Then, we
can compute the rest in time linear to the total number of non-zeros
in $a_{i}$ for $i\in S$ plus another $O(d^2)$ time multiplication by $P_t''$. Since $S$ is a random set of size $b$,
the total non-zeros is $O(\nnz(A)\frac{b}{n})$ in expectation. Therefore,
it takes in total
\mideq{O(\nnz(A)\frac{b}{n}+d^{2})}
time. 
\end{proof}
Now, it remains to bound the smoothness of $F_{i}$. 
\begin{lemma}
\label{lem:SVRG_smooth}Using the same notation as Lemma \ref{lem:SVRG_F},
we have that $L=O_{p}(n^{|1-2/p|})$, $\sigma=\Omega(1)$ and $\sum_{i\in[n]}L_{i}=O_{p}(n^{|1-2/p|}d)$.
\end{lemma}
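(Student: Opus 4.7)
The plan is to compute $\nabla^2 F_i$ explicitly and then exploit Lemma \ref{lem:cond} together with the sparsifier guarantee $\tfrac{1}{2} A^\top D_t A \preceq A^\top W_t A \preceq 2 A^\top D_t A$ to reduce everything to an orthogonal projection, which always has spectral norm exactly one.

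First I would observe that $\nabla^2 F_i(y) = \tilde{f}_{t,i}''(a_i\cdot P_t''y - b_i)\cdot (P_t''^\top a_i)(P_t''^\top a_i)^\top$, and therefore
\[
\nabla^2 F(y) = P_t''^\top A^\top \Sigma(y) A P_t''
\]
where $\Sigma(y)$ is the diagonal matrix with entries $\tilde{f}_{t,i}''(a_i\cdot P_t''y - b_i)$. By the construction of $\tilde{f}_{t}$ as a quadratic extension, the global smoothness/strong convexity of $\tilde{f}_{t,i}$ match those of $f_{(1-h)t}$ restricted to $\cN_{s(t)}(\gamma)$; thus Lemma \ref{lem:cond} gives $D_t \preceq \Sigma(y) \preceq \kappa D_t$ with $\kappa = O_p(n^{|1-2/p|})$.

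For the bound on $L$ and $\sigma$, I would then use the identity
\[
\sqrt{W_t}\, A (A^\top W_t A)^\dagger A^\top \sqrt{W_t} = \Pi
\]
where $\Pi$ is the orthogonal projection onto the range of $\sqrt{W_t} A$. Combining $D_t \preceq \Sigma(y) \preceq \kappa D_t$ with the sparsifier inequality gives
\[
\tfrac{1}{2}\, \Pi \preceq P_t''^\top A^\top D_t A P_t'' \preceq 2\,\Pi \quad\text{and hence}\quad \tfrac{1}{2}\,\Pi \preceq \nabla^2 F(y) \preceq 2\kappa\, \Pi.
\]
Since $\Pi$ has eigenvalues in $\{0,1\}$, this yields $L \leq 2\kappa = O_p(n^{|1-2/p|})$ immediately. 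For $\sigma$, note that $H := \ker P_t''$ is exactly the subspace on which each $F_i$ is invariant (as used in the remark following Lemma \ref{lem:Katyusha_mini}), and $H^\perp = \mathrm{range}(P_t''^\top) \subseteq \mathrm{range}(\sqrt{W_t}A) = \mathrm{range}(\Pi)$, so $\Pi$ acts as the identity on $H^\perp$ and we get $\sigma \geq 1/2 = \Omega(1)$ on $H^\perp$.

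For the per-component smoothness, $\nabla^2 F_i$ is rank one, so
\[
L_i = \tilde{f}_{t,i}''(\,\cdot\,)\, \|P_t''^\top a_i\|_2^2 \;\leq\; \kappa\, (D_t)_{ii}\, \|P_t''^\top a_i\|_2^2.
\]
A direct computation shows $\|P_t''^\top a_i\|_2^2 = a_i^\top (A^\top W_t A)^\dagger a_i$, and the sparsifier bound then gives $\|P_t''^\top a_i\|_2^2 \leq 2\, a_i^\top (A^\top D_t A)^\dagger a_i$. Summing and using $\sum_i (D_t)_{ii}\, a_i a_i^\top = A^\top D_t A$ yields
\[
\sum_{i\in[n]} L_i \;\leq\; 2\kappa\, \mathrm{tr}\!\left(A^\top D_t A \cdot (A^\top D_t A)^\dagger\right) \;\leq\; 2\kappa\, d \;=\; O_p(n^{|1-2/p|}\, d).
\]
The main obstacle, if any, is the book-keeping with pseudoinverses: one needs $a_i \in \mathrm{range}(A^\top) = \mathrm{range}(A^\top W_t A) = \mathrm{range}(A^\top D_t A)$ so that identities like $(A^\top W_t A)^\dagger (A^\top W_t A) a_i = a_i$ are valid; this follows because $W_t$ is a genuine spectral sparsifier (matching kernels with $A^\top D_t A$) and hence with $A^\top A$.
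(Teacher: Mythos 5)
Your proof is correct and follows essentially the same route as the paper: bound $\tilde f_{t,i}'' \preceq \kappa (D_t)_{ii}$ via Lemma~\ref{lem:cond}, reduce $a_i^\top P_t''(P_t'')^\top a_i$ to $a_i^\top (A^\top W_t A)^\dagger a_i$ via the Moore--Penrose identity, pass to $(A^\top D_t A)^\dagger$ via the sparsifier, and sum to a trace of a rank-$\le d$ projection. The one place you are actually \emph{more} careful than the paper is the $L,\sigma$ bound: the paper simply cites Lemma~\ref{lem:condg} (which is stated for $P_t$, not $P_t''$), whereas you explicitly propagate the sparsifier's factor-of-$2$ to show $\tfrac12\Pi \preceq \nabla^2 F \preceq 2\kappa\,\Pi$ and identify the subspace $H=\ker P_t''$ on which strong convexity is meaningful---a detail the paper leaves implicit.
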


\begin{proof}
The bound on $L$ and $\sigma$ follows from Lemma \ref{lem:condg}. For
the bound on $L_{i}$, we note that $\tilde{f}_{t,i}''\leq\kappa D_{t,ii}$
using Lemma \ref{lem:cond}. Therefore, we have 
\begin{align*}
\nabla^{2}F_{i}(y) & \preceq\kappa D_{t,ii}\cdot(P_{t}'')^{\top}a_{i}a_{i}^{T}(P_{t}'')
 \preceq\kappa D_{t,ii}\cdot a_{i}^{\top}P_{t}''(P_{t}'')^{\top}a_{i}\cdot I
\end{align*}
For the last term, using the definition of $P_{t}''$ and the fact
that $A^{\top}W_{t}A$ is a spectral sparsifier of $A^{\top}D_{t}A$,
we have that 
\begin{align*}
a_{i}^{\top}P_{t}''(P_{t}'')^{\top}a_{i} & =a_{i}^{\top}(A^{\top}W_{t}A)^{\dagger}A^{\top}W_{t}A(A^{\top}W_{t}A)^{\dagger}a_{i}
 = a_{i}^{\top}(A^{\top}W_{t}A)^{\dagger}a_{i}
  \leq 2a_{i}^{\top}(A^{\top}D_{t}A)^{\dagger}a_{i}
\end{align*}
Therefore, we have that 
\[
L_{i}\leq 2\kappa(\sqrt{D_{t}}A(A^{\top}D_{t}A)^{\dagger}A^{\top}\sqrt{D_{t}})_{ii}.
\]
Since $\sqrt{D_{t}}A(A^{\top}D_{t}A)^{\dagger}A^{\top}\sqrt{D_{t}}$
is a projection matrix with rank $d$, we have that
\[
\sum_{i}(\sqrt{D_{t}}A(A^{\top}D_{t}A)^{\dagger}A^{\top}\sqrt{D_{t}})_{ii}=d.
\]
This gives the result.
\end{proof}
Now, we can use Lemma \ref{lem:SVRG_F} and Lemma \ref{lem:SVRG_smooth}
in Lemma \ref{lem:Katyusha_mini} and get the following result:
\begin{theorem}
\label{thm:input_sparsity}
We can find $x$ such that
\[
c\cdot x+\norm{Ax-b}_{p}^{p}\leq\min_{x}c\cdot x+\norm{Ax-b}_{p}^{p}+\varepsilon.
\]
in time
\[
\widetilde{O}_{p}\left[(\nnz(A) \left(1+n^{\left|\frac{1}{2}-\frac{1}{p}\right|}\sqrt{\frac{d}{n}}\right)+n^{\left|\frac{1}{2}-\frac{1}{p}\right|}d^{2}+d^{\omega})\log \left(\frac{t_{0}^{p}}{\varepsilon}\right)\right]
\]
where $\ensuremath{t_{0}=\max((2c^{\top}(A^{\top}A)^{\dagger}c)^{\frac{1}{p-1}},2\|b\|_{2})}.$
Writing it in the input sparsity form, we have
\[
\widetilde{O}_{p}\left[(\nnz(A)+d^{\frac{1}{2}\max(p,\frac{p}{p-1})+1}+d^{\omega})\log \left(\frac{t_{0}^{p}}{\varepsilon}\right)\right].
\]
\end{theorem}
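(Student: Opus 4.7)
The plan is to run the homotopy meta-algorithm of Theorem \ref{th:main} verbatim but substitute mini-batch Katyusha (Lemma \ref{lem:Katyusha_mini}) for AGD in the inner loop, using the finite-sum decomposition $g_t(y)=\sum_{i=1}^n F_i(y)$ of Lemma \ref{lem:SVRG_F}. The phase schedule and initial point analysis (Lemmas \ref{lem:init} and \ref{lem:approx}) are unchanged, so the same $O(\log(t_0^p/\varepsilon))$ geometric phases suffice to reach additive error $\varepsilon$. It therefore only remains to cost out one phase and sum.

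Within a phase I would first pay the one-shot $\tilde O(\nnz(A)+d^\omega)$ preprocessing of Lemma \ref{lem:SVRG_F} to build the sparsifier $W_t$ and to factor $(A^\top W_tA)^\dagger$ as a dense $d\times d$ matrix. Plugging the smoothness parameters $L=O_p(n^{|1-2/p|})$, $\sigma=\Omega(1)$, and $\sum_iL_i=O_p(n^{|1-2/p|}d)$ of Lemma \ref{lem:SVRG_smooth} into Lemma \ref{lem:Katyusha_mini} with batch size $b$ yields iteration count
\[
T_b=\widetilde O_p\!\left(\tfrac{n}{b}+n^{|1/2-1/p|}+\tfrac{n^{1/2+|1/2-1/p|}\sqrt{d}}{b}\right),
\]
at per-iteration cost $\tilde O(\nnz(A)\,b/n + d^2)$. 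Expanding the product $T_b\cdot(\nnz(A)\,b/n+d^2)$, the six summands split into three that are $b$-independent, namely $\nnz(A)$, $n^{|1/2-1/p|}\sqrt{d/n}\,\nnz(A)$, and $n^{|1/2-1/p|}d^2$, and three that are $b$-dependent, namely $nd^2/b$, $n^{1/2+|1/2-1/p|}d^{5/2}/b$, and $n^{|1/2-1/p|-1}b\,\nnz(A)$. A short case analysis on whether $n\le d^{\max(p,q)/2}$ (with $q=p/(p-1)$) then forces the choice $b\asymp\sqrt{nd}$ in the ``small-$n$'' regime and $b\asymp n^{1-|1/2-1/p|}$ in the ``large-$n$'' regime; in either regime the $b$-dependent contribution is absorbed into the $b$-independent one, giving per-phase cost $\tilde O_p(\nnz(A)(1+n^{|1/2-1/p|}\sqrt{d/n})+n^{|1/2-1/p|}d^2+d^\omega)$. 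Multiplying by the number of phases produces the first displayed bound.

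The second, ``input-sparsity'' form is then obtained by trading the remaining $n$-dependence for $d$-dependence under the cap $n\le\tilde O(d^{\max(p,q)/2})$ (which can be enforced beforehand by a standard $\ell_p$-Lewis-weight row subsampling as in \cite{cohen2015p}). A direct calculation using $\nnz(A)\le nd$ shows that under this cap one has $n^{|1/2-1/p|}\sqrt{d/n}\,\nnz(A)\le\max(\nnz(A),\,d^{\max(p,q)/2+1})$ and $n^{|1/2-1/p|}d^2\le d^{\max(p,q)/2+1}$, yielding the claimed $\tilde O_p(\nnz(A)+d^{\frac12\max(p,p/(p-1))+1}+d^\omega)$ per $\log(t_0^p/\varepsilon)$ factor. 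The main technical obstacle is precisely the $b$-optimization: one must verify that a single (regime-dependent) batch size simultaneously controls all three $b$-dependent terms across every $(n,d,p)$ configuration. A minor additional check is that the strong-convexity hypothesis of Lemma \ref{lem:Katyusha_mini} is really only needed modulo $\ker A$, which is harmless because each $F_i$ depends on $y$ only through $AP_t''y$, matching the remark following Lemma \ref{lem:Katyusha_mini}.
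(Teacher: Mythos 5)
Your proof of the first displayed bound is essentially the paper's argument: the same finite-sum decomposition into the $F_i$ of Lemma \ref{lem:SVRG_F}, the same plugging of Lemma \ref{lem:SVRG_smooth} into Lemma \ref{lem:Katyusha_mini}, the same per-phase product $T_b\cdot(\nnz(A)b/n+d^2)$, the same split into $b$-dependent and $b$-independent terms, and the same two-regime case analysis keyed on whether $\kappa d\gtrless n$ (equivalently $n\lessgtr d^{\max(p,q)/2}$). Your batch sizes $b\asymp\sqrt{nd}$ and $b\asymp n/\sqrt{\kappa}$ differ from the paper's $Z$-dependent choices $b\asymp\sqrt{n^{3/2}d^{5/2}/Z}$ and $b\asymp\sqrt{n^2d^2/(Z\sqrt{\kappa})}$, but both work (I checked that with your choices every $b$-dependent term is dominated by a $b$-independent one in the relevant regime), so this is a cosmetic difference.

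However, your derivation of the \emph{input-sparsity form} has a genuine gap. You propose to ``enforce the cap $n\le\tilde O(d^{\max(p,q)/2})$ beforehand by a standard $\ell_p$-Lewis-weight row subsampling as in \cite{cohen2015p}.'' Lewis-weight subsampling only yields an $A'$ with $\|A'x\|_p=(1\pm\gamma)\|Ax\|_p$; solving the subsampled problem to arbitrary accuracy does not recover the original minimizer to additive error $\varepsilon$ with a $\log(1/\varepsilon)$ cost, so this step would reintroduce exactly the $\mathrm{poly}(1/\varepsilon)$ dependence that the theorem (and the paper's introduction, where the comparison to \cite{cohen2015p} is made explicitly) is designed to avoid. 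The paper never caps $n$: instead it observes that when $n>d^{\max(p,q)/2}$ the term $\sqrt{\kappa d/n}$ is already $O(1)$ so $\nnz(A)(1+\sqrt{\kappa d/n})=O(\nnz(A))$, and it bounds the remaining $\sqrt{\kappa}\,d^2=n^{|1/2-1/p|}d^2$ unconditionally by Young's inequality, giving $O_p(n+d^{4/(1+2/p)})\le O_p(\nnz(A)+d^{p/2+1})$. Replacing your subsampling step with this direct term-by-term bound (valid for all $n$) closes the gap and recovers the stated input-sparsity form without any loss in the $\varepsilon$-dependence.
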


\ifshortversion
\else
\begin{proof}
As we argued in Theorem \ref{th:main}, it suffices to solve it up
to $O_{p}(1/n^{O(1)})$ accuracy in each phase. Using Lemma \ref{lem:SVRG_smooth}
into Lemma \ref{lem:Katyusha_mini}, we have that Katyusha takes 
\begin{align*}
 & \widetilde{O}_{p}\left(\frac{n}{b}+\sqrt{\kappa}+\frac{1}{b}\sqrt{n\kappa d}\right)
\end{align*}
iterations. Now using Lemma \ref{lem:SVRG_F}, we know that the total
time is 
\begin{align*}
& \widetilde{O}_{p}\left[\left(\frac{n}{b}+\sqrt{\kappa}+\frac{1}{b}\sqrt{n\kappa d}\right) \left(Z\frac{b}{n}+d^{2}\right)+d^{\omega}\right] \\
& =  
\widetilde{O}_{p}\left[Z\left((1+ \sqrt{\frac{\kappa d}{n}}\right) + d^{\omega} + d^2 \sqrt{\kappa} + \frac{d^2 \sqrt{n}}{b} \sqrt{\kappa d + n} + Z \sqrt{\kappa} \frac{b}{n}  \right] ~.
\end{align*}
where $Z=\nnz(A)$ is the total number of non-zeros in $A$. We now choose $b$ to optimize the term
\begin{equation} \label{eq:lastminute}
 \frac{d^2 \sqrt{n}}{b} \sqrt{\kappa d + n} + Z \sqrt{\kappa} \frac{b}{n} ~.
\end{equation}
If $\kappa d\geq n$, then we choose $b=\left\lceil \sqrt{\frac{n^{\frac{3}{2}}d^{\frac{5}{2}}}{Z}}\right\rceil $
and we find that \eqref{eq:lastminute} is equal to (up to factor $2$) $\sqrt{Z} d^{5/4} n^{-1/4}$ which is always smaller than $Z \sqrt{\frac{\kappa d}{n}} + d^2$.
If $\kappa d\leq n$, then we choose $b=\left\lceil \sqrt{\frac{n^{2}d^{2}}{Z\sqrt{\kappa}}}\right\rceil $
and we find that \eqref{eq:lastminute} is equal to (up to factor $2$) $\sqrt{Z} d \kappa^{-1/4}$ which is also always smaller than $Z \sqrt{\frac{\kappa d}{n}} + d^2$.
Combining both cases, we have that the cost per phase is 
\[
\widetilde{O}_{p}\left[Z\left(1+\sqrt{\frac{\kappa d}{n}}\right)+\sqrt{\kappa}d^{2}+d^{\omega}\right].
\]
By Theorem \ref{th:main}, we know that the number of phase is $O(1)\cdot\log(\frac{np}{\varepsilon}t_{0}^{p})$.
This gives the first result.

To write the running time in input sparsity, we note that $\kappa d\geq n$
implies
\[
n\leq\begin{cases}
O_{p}(d^{\frac{p}{2}}) & \text{if }p\geq2\\
O_{p}(d^{\frac{1}{2-\frac{2}{p}}}) & \text{if }p\leq2
\end{cases}.
\]
For $p\geq2$, we have that
\[
\widetilde{O}_{p}\left[Z(1+\sqrt{\frac{\kappa d}{n}})\right]=\widetilde{O}_{p}\left[Z+nd\sqrt{\frac{\kappa d}{n}}\right]=\widetilde{O}_{p}\left[Z+d^{\frac{p}{2}+1}\right].
\]
Also, we note that
\[
\sqrt{\kappa}d^{2}\leq O_{p}(n^{\frac{1}{2}-\frac{1}{p}}d^{2})\leq O_{p}(n+(d^{2})^{\frac{1}{\frac{1}{2}+\frac{1}{p}}})=O_{p}(n+d^{\frac{4}{1+\frac{2}{p}}}).
\]
Combining both terms, we have
\begin{align*}
\widetilde{O}_{p}\left[Z(1+\sqrt{\frac{\kappa d}{n}})+\sqrt{\kappa}d^{2}+d^{\omega}\right] & =\widetilde{O}_{p}\left[Z+d^{\frac{p}{2}+1}+d^{\frac{4}{1+\frac{2}{p}}}+d^{\omega}\right]
  =\widetilde{O}_{p}\left[Z+d^{\frac{p}{2}+1}+d^{\omega}\right].
\end{align*}

Similarly, for $p\leq2$, the total running time is
\begin{align*}
\widetilde{O}_{p}\left[Z(1+\sqrt{\frac{\kappa d}{n}})+\sqrt{\kappa}d^{2}+d^{\omega}\right] & =\widetilde{O}_{p}\left[Z+d^{\frac{p}{2(p-1)}+1}+d^{\omega}\right].
\end{align*}
\end{proof}
\fi



\section{Self-concordance lower bound for $\ell_p^n$ balls} \label{sec:lb}
We first recall the definition, introduced in \cite{NN94}, of a self-concordant barrier. 

\begin{definition}
A function $\Phi: \mathrm{int}(\cK) \rightarrow \R$ is a barrier for $\cK$ if
$$\Phi(x) \xrightarrow[x \to \partial \cK]{} +\infty ~.$$
A $C^3$-smooth convex function $\Phi : \mathrm{int}(\cK) \rightarrow \R$ is self-concordant if for all $x \in  \mathrm{int}(\cK), h \in \R^n$,
\begin{equation} \label{eq:sc}
\nabla^3 \Phi(x) [h,h,h] \leq 2 (\nabla^2 \Phi(x)[h,h])^{3/2} ~.
\end{equation}
Furthermore it is $\nu$-self-concordant if in addition for all $x \in  \mathrm{int}(\cK), h \in \R^n$,
\begin{equation} \label{eq:nusc}
\nabla \Phi(x)[h] \leq \sqrt{\nu \cdot \nabla^2 \Phi(x)[h,h] } ~.
\end{equation}
\end{definition}

We also recall that for any convex body in $\R^n$ there exists a self-concordant barrier with self-concordance parameter $\nu =O(n)$. Furthermore, for $\ell_p^n$ balls, \cite{NN94,XY99} and Section 2.g in \cite{alizadeh2003second} showed that there even exists a {\em computationally efficient} barrier with such self-concordance parameter.
Our main theorem in this section is to show that the latter result is essentially unimprovable:
\begin{theorem} \label{th:lbscb}
Let $\Phi$ be a $\nu$-self-concordant barrier on the unit ball of $\ell_p^n$, $p >2$. Assume that $\Phi$ is symmetric in the sense that
$$\Phi(x_1,\hdots, x_n) = \Phi(|x_1|,\hdots,|x_n|) ~.$$
Then one has $\nu \geq \frac{n}{(O(p) \log(n))^{\frac{p}{p-2}}}$.
\end{theorem}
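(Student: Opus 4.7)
The plan is to combine the standard Dikin-ellipsoid characterization of a self-concordant barrier with a more delicate use of the third-order condition that exploits the $\ell_p$ geometry for $p>2$. By the sign-flip symmetry of $\Phi$ the origin is its minimizer and $H := \nabla^2 \Phi(0)$ is diagonal with positive entries $h_1, \ldots, h_n$. The first step is to extract the pointwise bound $h_i \geq 1$ from the inner Dikin inclusion $\{y : y^{\top} H y \leq 1\} \subseteq B_p^n$ evaluated at $e_i/\sqrt{h_i}$. Next, the Nesterov--Nemirovski outer inclusion $B_p^n \subseteq \{y : y^{\top} H y \leq O(\nu)\}$, combined with the Lagrange-multiplier computation
\[
\max_{\|y\|_p \leq 1}\; y^{\top} H y \;=\; \Bigl(\sum_{i=1}^n h_i^{p/(p-2)}\Bigr)^{(p-2)/p},
\]
gives $\sum_i h_i^{p/(p-2)} \leq O(\nu)^{p/(p-2)}$. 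Substituting the trivial $h_i \geq 1$ here yields only $\nu \geq \Omega(n^{(p-2)/p})$, weaker than the claimed bound by a factor of roughly $n^{2/p}$.

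Via the power-mean inequality $\sum_i h_i^{p/(p-2)} \geq n(\bar h)^{p/(p-2)}$ with $\bar h = \tfrac{1}{n}\sum_i h_i$, closing the gap is equivalent to establishing a trace lower bound $\sum_i h_i \gtrsim n^{1+2/p}/(O(p)\log n)^{p/(p-2)}$. This is the ``natural'' $\ell_p$ Hessian scale, consistent with the entropic barrier on $B_p^n$ having $H \approx n^{2/p}\,I$. To establish the trace bound, the plan is to leave the origin and use the barrier's behavior along boundary-approaching rays. For a unit direction $u \in \partial B_p^n$, the one-dimensional restriction $\phi_u(t) := \Phi(tu)$ is a $\nu$-self-concordant barrier on $(-1,1)$: the third-order condition gives the 1D inequality $|d(\phi_u''(t)^{-1/2})/dt| \leq 1$, while the barrier inequality $(\phi_u'(t))^2 \leq \nu \phi_u''(t)$ together with the divergence $\phi_u(t) \to \infty$ as $t \to 1$ pin down the behavior of $\phi_u''$ near the boundary. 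The aim is to convert this boundary-side information into a quantitative lower bound on $\phi_u''(0) = u^{\top} H u$ that is strictly stronger than the trivial $\geq 1$.

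To aggregate across coordinates I would decompose $[n]$ into dyadic scales $S_k = \{i : h_i \in [2^k, 2^{k+1})\}$, apply the one-dimensional analysis to sparse directions supported on each $S_k$ (probing the coordinates in each scale separately), and recombine using the Lagrangian characterization from the first step. Since the pointwise bounds $1 \leq h_i \leq O(\nu)$ leave only $O(\log \nu) = O(\log n)$ nonempty scales, the union-bound penalty over these scales is exactly the source of the $\log^{p/(p-2)}$ factor in the theorem, and the polynomial-in-$p$ constants come from the $\ell_p$--$\ell_2$ duality embedded in the Lagrangian.

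The hard part will be to make this dyadic coupling genuinely quantitative with the correct exponent: a one-dimensional restriction only sees $u^{\top} H u$ for a single direction $u$, whereas the exponent $p/(p-2)$ appearing in the bound demands a tight interplay across different dyadic scales dictated by $\ell_p$--$\ell_2$ duality. Achieving the sharp form $n/(O(p)\log n)^{p/(p-2)}$, rather than a looser $n^{1-o(1)}$-type bound, is where I expect the technical work to be concentrated.
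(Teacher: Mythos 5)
Your proposal correctly starts from the two facts that sign-symmetry forces the origin to be the minimizer with diagonal Hessian $H=\nabla^2\Phi(0)$, and that the inner Dikin inclusion forces $h_i\geq 1$. But it stops at a plan: you yourself flag the key trace bound $\sum_i h_i \gtrsim n^{1+2/p}/(O(p)\log n)^{p/(p-2)}$ as the step where ``the technical work is concentrated,'' and the dyadic decomposition plus one-dimensional-ray strategy you sketch is never turned into an argument. The specific mechanism you propose --- probing the Hessian \emph{at the origin} via 1D restrictions and grouping coordinates by the magnitude of $h_i$ --- does not on its face close the gap, because the Dikin inclusion at the origin is exactly what you have already used, and a 1D restriction $\phi_u(t)=\Phi(tu)$ only constrains the growth of $\phi_u''$ as $t\to\pm1$, not the absolute size of $\phi_u''(0)=u^{\top}Hu$ beyond what the Dikin ellipsoid already says. (As a smaller issue: the outer Dikin inclusion reads $B_p^n\subseteq W_{O(\nu)}(0)=\{y: y^{\top}Hy<O(\nu^2)\}$, not $O(\nu)$; correcting this drops your intermediate bound from $\Omega(n^{(p-2)/p})$ to the trivial $\Omega(n^{1/2-1/p})$, weaker even than the paper's warm-up Lemma~\ref{lem:lbscb}.)

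The paper's proof takes a different and genuinely global route. It runs a coordinate-wise random walk $X_0=0$, $X_i=X_{i-1}+\tfrac{1}{2n^{1/p}}\xi_i e_i$ with Rademacher $\xi_i$, landing exactly on $\|X_n\|_p=1/2$, and uses the Bregman-divergence lower bound (Theorem~\hyperref[th:scb]{\ref*{th:scb}.3}) at each step to accumulate $\Phi(X_n)-\Phi(0)$; this already gives the warm-up $\nu=\Omega(n^{1-2/p})$. The improvement then comes from \emph{dimension reduction}: at step $i$, if the local norm $\|e_i\|_{X_{i-1}}$ is small, the restriction of $\Phi$ to the slab $x_i=\tfrac{\varepsilon}{2c}$ is a symmetric self-concordant barrier on an $\ell_p^{n-1}$ ball of radius strictly less than $1$, and convexity of $\Phi$ translates this into a multiplicative gain $\nu_{n}\geq(1+\Theta_p(\varepsilon/c)^p)\nu_{n-1}$; if $\|e_i\|_{X_{i-1}}$ is large, the step contributes an additive $c^2/4$. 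Balancing the two regimes via $c\approx(\Theta(p)\log n)^{-1/(p-2)}$ and unrolling the recursion produces the $(1+\ldots)^{n-1}$ compounding that yields $\nu\geq n/(O(p)\log n)^{p/(p-2)}$. Your plan has no analogue of this slicing-and-recursing mechanism, which is what converts the weak per-step additive gain into a near-linear total; simply classifying coordinates of $H$ into dyadic scales at the origin does not supply it.
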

\begin{remark}
Let $q$ be the conjugate of $p$. Since we can construct a $O(\nu)$-self-concordant barrier function for $\ell_{q}^{n}$ using the barrier for $\ell_p$ (see Thm 2.4.4 and Prop 5.1.4 in \cite{NN94}), we also have an almost linear lower bound for the case $p<2$.
\end{remark}
We conjecture that the result holds without the symmetry assumption on $\Phi$. In fact there may even be a deeper reason why the ``optimal" self-concordant barrier for a ``symmetric" body should be ``symmetric", but we are not aware of any existing such result. At the moment without the symmetry assumption we can prove a $\tilde{\Omega}(n^{1/3})$ lower bound.

Let us now recall some general properties of self-concordant barriers.

\begin{theorem}[Prop 2.3.2 in \cite{NN94}, Sec 2.2 in \cite{Nem04b}] \label{th:scb}
Let $\Phi$ be $\nu$-self-concordant barrier for $\cK$. The following holds true.
\begin{enumerate}
\item For any $x, y \in \mathrm{int}(\cK)$,
\mideq{\Phi(y) - \Phi(x) \leq \nu \log \left( \frac{1}{1-\pi_x(y)} \right) ~,}
where $\pi_x(y)$ is the Minkowski gauge, i.e., $\pi_x(y) = \inf\{t>0 : x + \frac{1}{t} (y-x) \in \cK\}$.
\item For any $x \in \mathrm{int}(\cK)$ and $h$ such that $\|h\|_x \leq 1/2$,
\mideq{D_{\Phi}(x+h, x) \leq \|h\|_x^2 ~.}
\item For any $x \in \mathrm{int}(\cK)$ and $h$ such that $\|h\|_x \leq 1/2$,
\mideq{D_{\Phi}(x+h, x) \geq \frac{1}{4} \|h\|_x^2 ~.}
\item For $x \in \mathrm{int}(\cK)$ and $r>0$ let $W_r(x) = \{x + h : \|h\|_x < r\}$ be the Dikin ellipse of radius $r$ at $x$. Then one always has $W_1(x) \subset \cK$.
\end{enumerate}
\end{theorem}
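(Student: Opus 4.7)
My plan is to reduce all four statements to properties of the one–dimensional restriction $\phi(t):=\Phi(x+th)$ and exploit the fact that $\phi$ inherits self-concordance: $|\phi'''(t)|\leq 2\phi''(t)^{3/2}$, and if $\nu$-self-concordance holds then also $\phi'(t)^2\leq \nu\,\phi''(t)$. The single unifying observation is that the self-concordance ODE is equivalent to $\psi(t):=\phi''(t)^{-1/2}$ being $1$-Lipschitz on its domain of definition, since $\psi'(t)=-\tfrac{1}{2}\phi''(t)^{-3/2}\phi'''(t)$ and $|\psi'(t)|\leq 1$ follows directly from \eqref{eq:sc}. Normalizing $\|h\|_x=1$ gives $\psi(0)=1$, hence $1-t\leq \psi(t)\leq 1+t$ on $[0,1)$, i.e.
\[
\frac{1}{(1+t)^2}\leq \phi''(t)\leq \frac{1}{(1-t)^2}.
\]

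For claim 4, I let $b>0$ be the right endpoint of the maximal open interval on which $x+th\in\mathrm{int}(\cK)$; since $\Phi$ is a barrier, $\phi(t)\to+\infty$ as $t\uparrow b$. If $b\leq 1$, then the $1$-Lipschitz bound forces $\psi(t)\geq 1-b>0$, hence $\phi''$ stays bounded as $t\uparrow b$, hence $\phi'$ and $\phi$ are bounded there, contradicting the blow-up. Therefore $b>1$, and scaling back $h$ gives $W_1(x)\subset\cK$. For claims 2 and 3, I write $D_\Phi(x+h,x)=\int_0^1\!\int_0^s\phi''(u)\,du\,ds$ with $\phi(t)=\Phi(x+th)$ and $\|h\|_x\leq 1/2$; after rescaling $t\mapsto t\|h\|_x$ the two-sided bound on $\phi''$ from the previous paragraph double-integrates to $\|h\|_x^2-\log(1+\|h\|_x)\cdot\|h\|_x \leq D_\Phi\leq -\log(1-\|h\|_x)-\|h\|_x$ (times appropriate scaling), and the elementary inequalities $t-\log(1+t)\geq t^2/4$ and $-\log(1-t)-t\leq t^2$ on $[0,1/2]$ yield the advertised $\tfrac{1}{4}\|h\|_x^2$ and $\|h\|_x^2$ bounds.

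Claim 1 is the one that uses \eqref{eq:nusc}. Take $\phi(t)=\Phi(x+t(y-x))$; by (a direction of) the domain analysis of claim 4, $\phi$ is defined on $[0,t^*)$ with $t^*=1/\pi_x(y)$ and $\phi(t)\to+\infty$ as $t\uparrow t^*$ (possibly with $t^*=+\infty$ if $y-x$ points into a recession direction, in which case claim 1 is vacuous since $\pi_x(y)=0$). Let $t_0\in[0,1]$ be the first point where $\phi'(t_0)\geq 0$ (if none exists, $\phi(1)\leq \phi(0)$ and we are done). On $[t_0,t^*)$ convexity gives $\phi'\geq 0$, and \eqref{eq:nusc} applied along the ray yields $\phi'(t)^2\leq \nu\phi''(t)$; equivalently $\tfrac{d}{dt}(-1/\phi'(t))=\phi''(t)/\phi'(t)^2\geq 1/\nu$. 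Integrating from $t$ to $t^*$ and using that $-1/\phi'(t^*)=0$ gives $\phi'(t)\leq \nu/(t^*-t)$. Then
\[
\Phi(y)-\Phi(x)=\phi(1)-\phi(0)\leq \phi(1)-\phi(t_0)\leq \int_{t_0}^{1}\frac{\nu}{t^*-t}\,dt\leq \nu\log\frac{t^*}{t^*-1}=\nu\log\frac{1}{1-\pi_x(y)}.
\]

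The step I expect to be most delicate is the domain argument underpinning claim 4 (and reused in claim 1): showing that the $1$D restriction really blows up at the boundary rather than merely leaving the open interior. The key is that $\Phi$ is a barrier on $\mathrm{int}(\cK)$, so along a ray that reaches $\partial\cK$ at finite parameter value, $\phi$ must tend to $+\infty$; the rest of the argument is then routine integration. Everything else reduces to the single observation that $\phi''^{-1/2}$ is $1$-Lipschitz and to elementary calculus inequalities, so the proof structure is: (i) prove the $1$-Lipschitz lemma, (ii) derive the two-sided Hessian bound on the $1$D restriction, (iii) deduce 4, 2, 3 by direct integration, and (iv) deduce 1 by the $\nu$-inequality applied to $\phi'$.
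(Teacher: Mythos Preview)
The paper does not prove this theorem at all: it is stated as a quotation of known facts, with attribution to Nesterov--Nemirovski (Prop.~2.3.2 of \cite{NN94}) and Nemirovski's lecture notes \cite{Nem04b}. Your argument is essentially the standard textbook proof found in those references: reduce to the one-dimensional restriction, observe that $\phi''(t)^{-1/2}$ is $1$-Lipschitz, derive the two-sided bound $\frac{1}{(1+t)^2}\le\phi''(t)\le\frac{1}{(1-t)^2}$, and integrate. So there is nothing to compare approach-wise; you have supplied precisely the proof the paper chose to cite rather than reproduce.

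Two small corrections. First, in the recession-direction case $\pi_x(y)=0$ the claim is \emph{not} vacuous: the right-hand side is $\nu\log 1=0$, so you must actually show $\Phi(y)\le\Phi(x)$. This follows by the same differential inequality $\tfrac{d}{dt}(-1/\phi'(t))\ge 1/\nu$: if $\phi'(t_0)>0$ for some $t_0$, integrating to $+\infty$ forces $-1/\phi'$ to become positive, contradicting $\phi'\ge\phi'(t_0)>0$ by convexity; hence $\phi'\le 0$ throughout and $\phi(1)\le\phi(0)$. (For the paper's application to $\ell_p^n$ balls this case never arises since the body is bounded.) Second, your displayed lower bound ``$\|h\|_x^2-\log(1+\|h\|_x)\cdot\|h\|_x$'' is a slip; the double integral of $r^2/(1+ru)^2$ gives $r-\log(1+r)$, which is exactly the expression you then correctly use in the inequality $t-\log(1+t)\ge t^2/4$.
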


Before moving to the proof of Theorem \ref{th:lbscb} we make a few observations. An important property of self-concordant barriers not listed above is that they give a $O(\nu)$ rounding of the body in the sense that at the center $x^* = \argmin_x \Phi(x)$ one has $W_1(x^*) \subset \cK \subset W_{O(\nu)} (x^*)$ (See Theorem 4.2.6 in \cite{Nes04}). This directly implies that for the unit ball of $\ell_p^n$ one must have $\nu = \Omega(n^{1/2 - 1/p})$. In fact the following simple random walk argument improves this trivial bound to $\nu = \Omega(n^{1-2/p})$. First note that
thanks to Theorem \hyperref[th:scb]{\ref*{th:scb}.1} it suffices to find $x$ with say $\|x\|_p = 1/2$ and with $\Phi(x) - \Phi(0) = \Omega(n^{1-2/p})$. Next let $X_0=0$, and $X_i= X_{i-1}+\frac{1}{2 n^{1/p}} \xi_i e_i$ with $(\xi_i)_{i \in [n]}$ i.i.d. Rademacher random variables (notice that $\|X_n\|_p = 1/2$). Then (using Theorem \hyperref[th:scb]{\ref*{th:scb}.3} for the inequality):
\begin{eqnarray*}
\E \Phi(X_n) - \Phi(0) & = & \E \sum_{i=1}^n (\Phi(X_i) - \Phi(X_{i-1})) = \E \sum_{i=1}^n D_{\Phi}(X_i, X_{i-1}) \\ 
& \geq &  \E \sum_{i=1}^n \min\{\frac{1}{4} \|X_i - X_{i-1}\|_{X_{i-1}}^2,\frac{1}{16}\} \\
& = & \E \sum_{i=1}^n \min\{\frac{1}{16 n^{2/p}}\|e_i\|_{X_{i-1}}^2,\frac{1}{16}\} ~.
\end{eqnarray*}
Crucially we now observe that Theorem \hyperref[th:scb]{\ref*{th:scb}.4} shows that $\|e_i\|_{X_{i-1}} \geq 1$ (since $X_{i-1} + e_i$ is outside of the $\ell_p^n$ unit ball), which concludes the proof of the following lemma.
\begin{lemma} \label{lem:lbscb}
Let $\Phi$ be a $\nu$-self-concordant barrier on the unit ball of $\ell_p^n$, $p >2$. One has $\nu = \Omega(n^{1-2/p})$.
\end{lemma}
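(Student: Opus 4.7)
The plan is to lower bound $\Phi(y)-\Phi(0)$ at a suitable $y$ near the boundary of the $\ell_p^n$ ball and to compare with Theorem~\ref{th:scb}.1, which yields $\Phi(y)-\Phi(0)\le\nu\log(1/(1-\|y\|_p))$. By the sign symmetry of $\Phi$, the origin is the minimizer with $\nabla\Phi(0)=0$, so $\Phi(y)-\Phi(0)$ decomposes as a sum of Bregman divergences along any discrete path from $0$ to $y$. Targeting a $y$ with $1-\|y\|_p\ge n^{-O(1)}$, so that the Theorem~\ref{th:scb}.1 upper bound is only $O(\nu\log n)$, reduces the task to exhibiting such a path whose total Bregman divergence is at least $n/(O(p)\log n)^{p/(p-2)-1}$.

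I will construct the path as a refinement of the one-pass Rademacher walk of Lemma~\ref{lem:lbscb} into $K=\Theta(\log n)$ passes: pass $k$ touches each coordinate once via $X_i\leftarrow X_{i-1}+\xi_i\beta^{(k)}e_{\pi(i)}$ with independent Rademacher $\xi_i$, and the step size $\beta^{(k)}$ is chosen so that at the end of pass $k$ the walker lies in the shell $1-\|X\|_p\asymp 2^{-k}$. The crucial geometric input is the Dikin-ball containment of Theorem~\ref{th:scb}.4, which in that shell gives the coordinate Dikin-norm lower bound $\|e_i\|_X\ge 1/r_i(X)\gtrsim(p\cdot 2^{-k})^{-1/p}$, a factor of $(p\cdot 2^{-k})^{-1/p}$ larger than the crude $\|e_i\|_X\ge 1$ that underlies Lemma~\ref{lem:lbscb}. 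Theorem~\ref{th:scb}.3 then converts this boost into Bregman divergence at least $\tfrac14(\beta^{(k)}\|e_i\|_X)^2$ per step when the step is small, or $\Omega(1)$ per step in the saturation regime $\beta^{(k)}\|e_i\|_X\asymp\Theta(1)$.

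The step-size scale is forced by the Taylor expansion $\E\bigl[|a\pm\beta|^p\bigr]-a^p\asymp p^2a^{p-2}\beta^2$, at the typical coordinate value $a\asymp n^{-1/p}$: advancing $\|X\|_p^p$ by one shell requires $(\beta^{(k)})^2\asymp 2^{-k}/(p^2n^{2/p})$, which fixes the per-step Bregman divergence at $\gtrsim 2^{-k(1-2/p)}/(p^{O(1)}n^{2/p})$ and the per-pass total at $\gtrsim n^{1-2/p}\cdot 2^{-k(1-2/p)}/p^{O(1)}$. The naive geometric sum over $k$ only reproduces the $n^{1-2/p}$ bound of Lemma~\ref{lem:lbscb}; the near-linear upgrade comes from two refinements acting jointly: (i) exploiting the saturation regime of Theorem~\ref{th:scb}.3, where each step contributes an order-one Bregman divergence rather than the small-quadratic bound, and (ii) running many independent Rademacher passes at each fixed shell, since cancellation across the $n$ independent signs allows the walk to remain in the shell long enough to accumulate significant Bregman divergence before it must advance. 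Tracking the $p$ and $\log n$ dependence through the compounded tradeoff between the step-size scale $(2^{-k})^{1/2}/n^{1/p}$ and the Dikin-norm boost $(2^{-k})^{-1/p}$ across the $K\sim\log n$ shells—together with the $\log n$ upper-bound loss in Theorem~\ref{th:scb}.1—produces exactly the exponent $p/(p-2)$ in the claimed bound $\nu\ge n/(O(p)\log n)^{p/(p-2)}$.

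The main obstacle is a concentration argument that the random walk stays within the intended shell throughout each pass, so that both the Dikin-norm lower bound and the saturation condition hold uniformly at essentially every step. This reduces to martingale bounds on the pass-wise $\ell_p^p$-increment $\sum_i\bigl(|X_i+\xi_i\beta^{(k)}|^p-|X_i|^p\bigr)$ around its mean $\asymp p^2n^{2/p}(\beta^{(k)})^2$, controlled via Khintchine-type inequalities on Rademacher sums of $p$th powers. The sign symmetry of $\Phi$ is used throughout to reduce Hessian estimates at an arbitrary walk state $X$ to estimates at the all-positive representative $|X|$, making the Dikin-norm estimates uniform over the $2^n$ sign-orbits of $X$ and allowing the per-step Bregman lower bounds to be applied pointwise rather than only in expectation.
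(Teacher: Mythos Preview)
You are proving the wrong statement. The lemma asks only for $\nu=\Omega(n^{1-2/p})$ and carries \emph{no} symmetry hypothesis; the near-linear bound $\nu\ge n/(O(p)\log n)^{p/(p-2)}$ that you target is Theorem~\ref{th:lbscb}, a separate (and stronger) result which does assume symmetry. You even say so yourself: you describe your construction as ``a refinement of the one-pass Rademacher walk of Lemma~\ref{lem:lbscb}'', treating the lemma as already known. In fact that one-pass walk \emph{is} the paper's entire proof of the lemma: set $X_0=0$, $X_i=X_{i-1}+\tfrac{1}{2n^{1/p}}\xi_i e_i$, use $\E[\Phi(X_i)-\Phi(X_{i-1})]=\E[D_\Phi(X_i,X_{i-1})]$ (the linear term vanishes in expectation because $\xi_i$ is independent of $X_{i-1}$, so no symmetry of $\Phi$ is needed), bound each Bregman term below via Theorem~\ref{th:scb}.3 and the crude Dikin estimate $\|e_i\|_{X_{i-1}}\ge 1$ from Theorem~\ref{th:scb}.4, and finish with Theorem~\ref{th:scb}.1 at $\|X_n\|_p=1/2$. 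That is all the lemma requires.

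Your elaborate multi-pass-with-concentration plan is thus unnecessary here, and it also imports the symmetry assumption (``By the sign symmetry of $\Phi$, the origin is the minimizer\ldots'') that the lemma does not grant. As a side remark, even viewed as an attempt at Theorem~\ref{th:lbscb}, your route differs from the paper's: the paper does not run multiple passes or invoke concentration, but instead sets up a recursion in the dimension by restricting $\Phi$ to the slice $\{x:x_i=\epsilon/(2c)\}$, obtaining a barrier on an $\ell_p^{n-1}$ ball and comparing $\Phi(X_i)-\Phi(0)$ to the analogous quantity in dimension $n-1$. Whether your multi-pass scheme can be made rigorous is unclear from the sketch (the ``saturation regime'' and the concentration of the $\ell_p^p$ increments would need to be worked out carefully), but in any case it is aimed at the wrong target.
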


From a high level point of view, one can hope to improve the above argument using that the Hessian at $X_{i-1}$ should intuitively increase with $i$, meaning that $\|e_i\|_{X_{i-1}}$ could be potentially much bigger than $1$. We formalize this idea in the following proof of Theorem \ref{th:lbscb}.

\begin{proof} of Theorem \ref{th:lbscb}.
Fix $\epsilon = \frac{1}{2 n^{1/p}}$ and let $(X_i)_{i \in \{0,\hdots,n\}}$ be defined as above (observe that by the symmetry assumption $\Phi(X_i)$ is in fact a non-random quantity) and recall that we proved
\begin{equation} \label{eq:scbproof1}
\Phi(X_i) - \Phi(X_0) \geq \Phi(X_{i-1}) - \Phi(X_0) + \min\{\frac{\epsilon^2}{4} \|e_i\|_{X_{i-1}}^2,\frac{1}{16}\} ~.
\end{equation}
We denote by $\nu_{i,n}$ the infimum of $\Phi(X_i) - \Phi(X_0)$ over all self-concordant barriers $\Phi$ on the unit ball of $\ell_p^n$. Note that $\nu_{i,n}$ is increasing with respect to both indices $i$ and $n$. We now consider two cases, depending on whether $\|e_i\|_{X_{i-1}}$ is larger than $c/\epsilon$ or not, where $c \in (0,1)$ will be fixed later. If it is larger then we will simply use \eqref{eq:scbproof1}, so let us assume that it is smaller. Then we know that (using Theorem \hyperref[th:scb]{\ref*{th:scb}.3})
$$\Phi\left(X_{i-1} + \frac{\epsilon}{2 c} e_i \right) - \Phi(X_{i-1}) \leq \frac{\epsilon^2}{4 c^2} \|e_i\|_{X_{i-1}}^2 ~,$$
and in particular multiplying this equation by $c^2$ and using \eqref{eq:scbproof1} (as well as $\Phi(X_{i-1}) \leq \Phi(X_i)$) we get
\begin{eqnarray*}
(1+c^2) \Phi(X_i) & \geq & \Phi(X_{i-1}) + \frac{\epsilon^2}{4} \|e_i\|_{X_{i-1}}^2 + c^2 \Phi(X_{i-1}) \\
& \geq & \Phi(X_{i-1}) + c^2 \Phi\left(X_{i-1} + \frac{\epsilon}{2 c} e_i \right) ~.
\end{eqnarray*}
Let us now consider the function $\psi : e_i^{\perp} \rightarrow \overline{\R}$ defined by $\psi(z) = \Phi\left(z+\frac{\epsilon}{2 c} e_i \right)$. Clearly $\psi$ is a symmetric self-concordant barrier on an $\ell_p^{n-1}$ ball of radius $R:=(1-(\epsilon/2c)^p)^{1/p}$, and thus by convexity of $\psi$ and the definition of $\nu_{i,n}$,
$$R (\psi(X_{i-1}) - \psi(0)) \geq \psi(R X_{i-1}) - \psi(0) \geq \nu_{i-1,n-1} ~.$$
Finally putting the above together with $\psi(0) \geq \Phi(X_0)$ and $1 / R \geq \left(1+\left(\frac{\epsilon}{2c}\right)^p \right)^{1/p} \geq 1+\frac{1}{2p}\left(\frac{\epsilon}{2c}\right)^p$ we proved that either
$$\Phi(X_i) - \Phi(X_0) \geq \Phi(X_{i-1}) - \Phi(X_0) + \frac{c^2}{4} ~.$$
or 
$$\Phi(X_i) - \Phi(X_0) \geq \frac{1}{1+c^2} (\Phi(X_{i-1}) - \Phi(X_0)) + \frac{c^2}{1+c^2} \left(1+\frac{1}{2p}\left(\frac{\epsilon}{2c}\right)^p \right) \nu_{i-1,n-1} ~.$$
In particular we showed that, for $i=n$, with $\nu_n := \nu_{n,n}$, 
$$\nu_{n}\geq\nu_{n-1}+\min\left(\frac{c^{2}}{4},\frac{c^{2}}{1+c^{2}}\frac{1}{2p}\left(\frac{\epsilon}{2c}\right)^{p}\nu_{n-1}\right)~,$$
which implies in particular (by simply checking when the minimum in the above equation is attained at the first term)
$$\nu_{n}\geq\min\left(\frac{(1+c^{2})p}{2}\left(\frac{2c}{\epsilon}\right)^{p},\left(1+\frac{c^{2}}{1+c^{2}}\frac{1}{2p}\left(\frac{\epsilon}{2c}\right)^{p}\right)\nu_{n-1}\right)~,$$
which means by induction (since $\nu_1 \geq \epsilon^2 /4$ by \eqref{eq:scbproof1} and $\|e_1\|_{X_0} \geq 1$)
$$\nu_{n}\geq\min\left(\frac{(1+c^{2})p}{2}\left(\frac{2c}{\epsilon}\right)^{p},\left(1+\frac{c^{2}}{1+c^{2}}\frac{1}{2p}\left(\frac{\epsilon}{2c}\right)^{p}\right)^{n-1}\frac{\varepsilon^{2}}{4}\right)~.$$
Taking $c = \Theta\left((\Theta(p) \log(n))^{\frac{-1}{p-2}}\right)$ concludes the proof with trivial calculations.
\end{proof}

\begin{theorem} \label{th:lbscb2}
Let $\Phi$ be a $\nu$-self-concordant barrier on the unit ball of $\ell_p^n$, $p >2$. Then one has $\nu \geq \frac{n^{1/p}}{(O(p) \log(n))^{\frac{p}{p-2}}}$.
Combining with Lemma \ref{lem:lbscb}, we have that $\nu = \Omega \left( \frac{n^{1/3}}{\log(n)}\right)$.
\end{theorem}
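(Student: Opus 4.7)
\noindent The proof will adapt the random-walk argument of Theorem~\ref{th:lbscb} to the non-symmetric setting by working throughout with expectations. I use the same Rademacher walk $X_0 = 0$, $X_i = X_{i-1} + \epsilon \xi_i e_i$ with $\epsilon = \tfrac{1}{2n^{1/p}}$, which gives the deterministic value $\|X_n\|_p = 1/2$. Applying Theorem~\ref{th:scb}.1 with $x=0$, $y=X_n$ (so $\pi_0(X_n) = \|X_n\|_p = 1/2$) yields the upper bound $\E[\Phi(X_n)] - \Phi(0) \leq \nu \log 2$, and the whole task reduces to producing the matching lower bound $\E[\Phi(X_n)] - \Phi(0) \gtrsim n^{1/p}/(O(p) \log n)^{p/(p-2)}$.

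The main technical point is that, without the coordinatewise symmetry of $\Phi$, the values $\Phi(X_i)$ are genuine random variables. Conditioning on $X_{i-1}$ and averaging over $\xi_i$, however, kills the first-order contribution $\nabla \Phi(X_{i-1}) \cdot \xi_i \epsilon e_i$, so the two key inequalities of Theorem~\ref{th:lbscb}'s proof survive in expectation:
\[
\E\bigl[\Phi(X_i) - \Phi(X_{i-1}) \mid X_{i-1}\bigr] \;\geq\; \min\!\bigl\{\tfrac{\epsilon^2}{4} \|e_i\|_{X_{i-1}}^2, \tfrac{1}{16}\bigr\},
\]
and, on the event $\|e_i\|_{X_{i-1}} \leq c/\epsilon$, an averaged version of Theorem~\ref{th:scb}.2 combined with the previous inequality gives
\[
(1+c^2)\,\E\bigl[\Phi(X_i) \mid X_{i-1}\bigr] \;\geq\; \Phi(X_{i-1}) \;+\; \tfrac{c^2}{2}\bigl(\Phi(X_{i-1} + \tfrac{\epsilon}{2c} e_i) + \Phi(X_{i-1} - \tfrac{\epsilon}{2c} e_i)\bigr).
\]
Each of the functions $z \mapsto \Phi(z \pm \tfrac{\epsilon}{2c} e_i)$ (viewed on $e_i^\perp$) is a $\nu$-self-concordant barrier on the $\ell_p^{n-1}$ ball of radius $R = (1 - (\epsilon/2c)^p)^{1/p}$, and rescaling by $R$ turns them into $\nu$-SC barriers on the $\ell_p^{n-1}$ unit ball.

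The main obstacle is the recursion. In Theorem~\ref{th:lbscb}, the coordinate symmetry forces $\Phi(X_{i-1} \pm \tfrac{\epsilon}{2c} e_i)$ to be a deterministic quantity that, after rescaling, reduces exactly to the inductive quantity $\nu_{i-1,n-1}$; this gives the exponentially compounding recursion $\nu_n \geq (1+\mathrm{tiny})\,\nu_{n-1}$ and ultimately the near-linear bound $n/\log^{p/(p-2)}$. Without symmetry, the two shifted barriers are distinct and their evaluations at $X_{i-1}$ remain random, so I can only lower-bound $\E[\Phi(X_{i-1} \pm \tfrac{\epsilon}{2c} e_i) - \Phi(\pm \tfrac{\epsilon}{2c} e_i)]$ by recursively invoking the same non-symmetric bound on the rescaled $(n-1)$-dimensional barriers. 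The resulting recursion loses at each level and compounds only polynomially, which is precisely what degrades the $n$-bound of the symmetric case to $n^{1/p}$. The polylogarithmic correction $(O(p)\log n)^{p/(p-2)}$ arises from the same optimization $c = \Theta((\Theta(p)\log n)^{-1/(p-2)})$ as in Theorem~\ref{th:lbscb}, which balances the two branches of the inductive minimum.

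Combining this lower bound with the upper bound $\nu \log 2$ establishes $\nu \geq n^{1/p}/(O(p)\log n)^{p/(p-2)}$. Combined with the $\Omega(n^{1-2/p})$ bound of Lemma~\ref{lem:lbscb}, the two exponents $1/p$ and $1-2/p$ coincide at $p = 3$, and taking the maximum over the two bounds gives the stated $\nu = \Omega(n^{1/3}/\log n)$ uniformly in $p > 2$.
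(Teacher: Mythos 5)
Your sketch reproduces the right starting moves (the Rademacher walk, conditioning on $X_{i-1}$ to kill the first-order term, the upper bound $\nu\log 2$ via the Minkowski gauge of $X_n$ from $0$, and the averaged versions of Theorem~\ref{th:scb}.2 and \ref{th:scb}.3), but it misidentifies the mechanism that produces $n^{1/p}$ and as a result leaves an essential gap. You attribute the loss from $n$ to $n^{1/p}$ to a ``recursion that loses at each level and compounds only polynomially.'' You never write down this recursion or explain why it compounds polynomially rather than exponentially; a recursion of the schematic form $v_m \geq (1+\delta)\,v_{m-1}$ compounds geometrically, and it is not clear what would cause polynomial growth here. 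More importantly, this is not where the $n^{1/p}$ comes from.

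The missing idea is a dichotomy over the random Hessians. Because $\Phi$ need not be symmetric, the local norm $\|e_i\|_{X_{i-1}}$ is a random quantity, and the quadratic-extension step (applying Theorem~\ref{th:scb}.2 at the enlarged step $\frac{\epsilon}{2c}e_i$) requires $\|\frac{\epsilon}{2c}e_i\|_{X_{i-1}} \leq \frac{1}{2}$, i.e.\ $\|e_i\|_{X_{i-1}} \leq c/\epsilon$. The paper therefore splits into two cases. If $\|e_i\|_{X_{i-1}} \leq c/\epsilon$ for \emph{every} realization of $X_{i-1}$ and every $i$, then the recursion from the symmetric proof goes through unchanged (it only ever used the Hessian bound, never the symmetry beyond making $\Phi(X_i)$ deterministic), and one still gets $\nu = \Omega(n)$ after optimizing $c$. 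The degraded bound arises only in the complementary case: if $\|e_i\|_{X_{i-1}} > c/\epsilon$ for \emph{some} realization and some $i$, then $\E[\Phi(X_i)\mid X_{i-1}] - \Phi(X_{i-1}) \geq c^2/4$ at that step, while Theorem~\ref{th:scb}.1 applied to that single step (whose Minkowski gauge from $X_{i-1}$ is $O(n^{-1/p})$) gives $\Phi(X_{i-1}\pm\epsilon e_i) - \Phi(X_{i-1}) \leq O(\nu n^{-1/p})$; combining the two yields $\nu = \Omega(n^{1/p}c^2)$. Taking the minimum of the two cases and choosing $c = \Theta\left((\Theta(p)\log n)^{-1/(p-2)}\right)$ produces the claimed $n^{1/p}$ bound. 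Without this case split, the second inequality in your sketch simply cannot be invoked when the Hessian is large, and the argument does not go through as written.

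Your concluding remark about the maximum of the two bounds and the crossover at $p=3$ is fine, but it rests on the $n^{1/p}$ bound being established first, which your sketch does not actually do.
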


\ifshortversion
The proof is a small modification of the symmetric case. See full version for the details.
\else
\begin{proof}
The proof for the asymmetric case is essentially the same as the symmetric
case. Again, let $\varepsilon=\frac{1}{2n^{1/p}}$ and let $X_{i}$
be the random process defined previously. However, we note that $\Phi(X_{i})$
is now a random variable, unlike in the previous proof.

The main difference is that without the symmetry assumption, $\|e_{i}\|_{X_{i-1}}$
depends not only on $i$, but also on $X_{i-1}$. Hence, we separate
the cases to $\|e_{i}\|_{X_{i-1}}$ is large for some $X_{i-1}$ and
$\|e_{i}\|_{X_{i-1}}$ is small for all $X_{i-1}$.

For the first case ($\|e_{i}\|_{X_{i-1}}\leq\frac{c}{\varepsilon}$
for some $X_{i-1}$ and some $i$), we do the same calculation
as \eqref{eq:scbproof1} and obtain
\[
\E\left[\Phi(X_{i})|X_{i-1}\right]-\Phi(X_{i-1})\geq\frac{c^{2}}{4}.
\]
Instead of summing the difference for each step, we simply note
that $\pi_{X_{i-1}}(X_{i})=\Omega(n^{-1/p})$. This gives that $\nu=\Omega(n^{1/p}c^{2})$.

For the second case ($\|e_{i}\|_{X_{i-1}}\leq\frac{c}{\varepsilon}$
for all possible $X_{i-1}$ and $i \in [n]$), the
previous proof still holds and we get that 
\[
\E\left[\Phi(X_{i})|X_{i-1}\right]-\Phi(X_{0})\geq\frac{1}{1+c^{2}}(\Phi(X_{i-1})-\Phi(X_{0}))+\frac{c^{2}}{1+c^{2}}\left(1+\frac{1}{2p}\left(\frac{\varepsilon}{2c}\right)^{p}\right)v_{i-1,n-1}
\]
where $v_{i-1,n-1}$ is now the infimum of $\E\Phi(X_{i})-\Phi(X_{0})$
over possibly asymmetric self concordant barriers $\Phi$ on $\ell_{p}^{n}$
and the expectation is taken over all possible $\pm\varepsilon$ for
the first $i$ coordinate and zero otherwise. Since the Hessian bound
holds for all $X_{i-1}$, we can repeat the argument and get that
\begin{align*}
v_{n,n} & \geq\left(1+\frac{c^{2}}{1+c^{2}}\frac{1}{2p}\left(\frac{\varepsilon}{2c}\right)^{p}\right)v_{n-1,n-1}\\
 & \geq\left(1+\frac{c^{2}}{1+c^{2}}\frac{1}{2p}\left(\frac{\varepsilon}{2c}\right)^{p}\right)^{n-1}v_{1,1}\\
 & \geq\left(1+\frac{c^{2}}{1+c^{2}}\frac{1}{2p}\left(\frac{\varepsilon}{2c}\right)^{p}\right)^{n-1}\frac{\varepsilon^{2}}{4}.
\end{align*}
Setting $c=O((\Theta(p)\log n)^{\frac{-1}{p-2}})$, we have that $v_{n,n}\geq n$
and that implies that $\nu=\Omega(n)$.

Combining both cases, we have that $\nu=\Omega(n^{1/p}(\Theta(p)\log n)^{\frac{-1}{p-2}})$.
\end{proof}
\fi
\subsection*{Acknowledgement}
We thank Farid Alizadeh, Arkadi Nemirovski, Aaron Sidford, Yinyu Ye and Yuriy Zinchenko for helpful discussions. The third author have worked/discussed with some of them on constructing an self-concordance barrier for $\ell_p$ ball. This work was supported in part by NSF award CCF-1740551 and CCF-1749609.
\bibliographystyle{plainnat}
\bibliography{newbib}
\end{document}